\setlist{
  listparindent=\parindent,
  parsep=0pt,
}
\numberwithin{equation}{section} 
\theoremstyle{plain} 
\theoremstyle{definition} 
\newcommand\CorrespondingAuthor[1]{%
  \begingroup%
  \def\@makefnmark{}%
  \footnotetext{Corresponding author: #1}%
  \endgroup%
}
\renewenvironment{abstract}{%
  \small%
  \providecommand\keywords{%
    \par\medskip\noindent\textit{Keywords:}\xspace}%
  \begin{center}%
    \bfseries \abstractname\vspace{-.5em}\vspace{\z@}%
  \end{center}%
  \quote%
}{\endquote}
\newcommand{\EE}{{\mathbb E}}
\newcommand{\RR}{{\mathbb R}}
\newcommand{\bx}{{\boldsymbol x}}
\newcommand{\bX}{{\boldsymbol X}}
\newcommand{\by}{{\boldsymbol y}}
\newcommand{\bY}{{\boldsymbol Y}}
\newcommand{\bth}{\boldsymbol{\theta}}
\newcommand{\dee}[1]{\, {\rm d}{#1}}
\newcommand{\ra}{\rightarrow}
\newcommand{\ddl}{{d}_L^\ra}
\newcommand{\an}{\mathrm{an}}
\newcommand{\pa}{\mathrm{pa}}
\newcommand{\ch}{\mathrm{ch}}
\newcommand{\de}{\mathrm{de}}
\newcommand{\lik}{{\cal L}}
\newtheorem{prop}{Proposition}
\begin{document}

\title{Point processes on directed linear network}

\author{Jakob G. Rasmussen}

\author{Heidi S. Christensen}

\affil{Department of Mathematical Sciences, Aalborg University}

%
%

\date{}


\maketitle

\begin{abstract}

  In this paper we consider point processes specified on directed
  linear networks, i.e.\ linear networks with associated
  directions. We adapt the so-called conditional intensity function
  used for specifying point processes on the time line to the setting
  of directed linear networks. For models specified by such a
  conditional intensity function, we derive an explicit expression for
  the likelihood function, specify two simulation algorithms (the
  inverse method and Ogata's modified thinning algorithm), and
  consider methods for model checking through the use of residuals. We
  also extend the results and methods to the case of a marked point
  process on a directed linear network. Furthermore, we consider
  specific classes of point process models on directed linear networks
  (Poisson processes, Hawkes processes, non-linear Hawkes processes,
  self-correcting processes, and marked Hawkes processes), all adapted
  from well-known models in the temporal setting. Finally, we apply
  the results and methods to analyse simulated and neurological data.

  \keywords Conditional intensity, dendrite network, Hawkes process,
  self-correcting process
\end{abstract}

\section{Introduction}
\label{intro}
Point processes on linear networks are important for modelling events or objects on a real network, such as a road or river network. In recent years there have been a fair amount of papers on functional summary statistics and models for point processes specified on linear networks \citep[see][]{okabe-01,ang-etal-12,baddeley-etal-14,mcswiggan-etal-16,baddeley-etal-17,rakshit-etal-17}. Specifically, \cite{okabe-01} present a network analogue of Ripley's $K$-function of which \cite{ang-etal-12} suggest a correction that compensates for the geometry of the network, making it possible to compare $K$-functions for different networks directly. For these $K$-functions it is required that the point process is second-order pseudostationary, meaning that the intensity is constant and the pair correlation function depends only on the geodesic distance. However, \cite{baddeley-etal-17} discuss the difficulties of finding such point processes, and \cite{rakshit-etal-17} discuss using alternative distance metrics and present analogues of the $K$-function and pair correlation function wrt.\ these metrics. Further, \cite{baddeley-etal-14} present methods for analysing multitype point processes on networks, and \cite{mcswiggan-etal-16} address problems with existing kernel estimates of the intensity point processes and further develop a new kernel estimate eluding these problems. 

In the present paper we consider directed linear networks, i.e.\ networks consisting of line segments with an associated direction. Such directions appear naturally in some applications, while directions cannot be used, or at the very least are rather artificial, in other applications. For example, river networks have a natural direction following the flow of water, while the bidirectionality of (most) roads means that directed networks are not particularly useful as models for road networks. 
\cite{ver-hoef-etal-06}, \cite{garreta-etal-09}, and \cite{ver-hoef-etal-12} consider Gaussian processes and covariance functions on so-called stream networks, which are special cases of directed linear networks. In the present paper, however, we focus on point processes specified by a modified version of the conditional intensity function often used for point processes on the time line (see e.g.\ Chapter 7 in \cite{daley-vere-jones-03} for an introduction to these). 
On the time line, the conditional intensity is based on conditioning on the past, and for a directed linear network the directions enable us to modify the notion of past and thereby to extend the definition of a conditional intensity. 

There are many types of data suitable for modelling by a point process specified by a conditional intensity function on a directed linear network. One example is the locations of spines  along a dendritic tree, where we can introduce directions going from the root of the tree towards the leaves of the network.  Spines play a role in e.g.\ memory storage, and changes in the spine distribution and shape have been linked to neurological diseases \citep{irwin-etal-01}. Only a few studies \citep{jammalamadaka-etal-13,baddeley-etal-14} model the distribution of spines using point processes on (undirected) linear networks. Further, these studies only consider the Poisson process and the multitype Poisson process when spines are classified into types depending on their shape. 

The outline of the paper is as follows: In Section~\ref{sec.PPLN} we first define directed linear networks and a number of related concepts, and next what we mean by a conditional intensity function on a directed linear network. In Section~\ref{sec.likfunc} we derive the likelihood function for a point process specified by such a conditional intensity function, and in Section~\ref{sec.sim} we consider two simulation algorithms. In Section~\ref{sec:residual_analysis} we discuss a method for model checking based on residuals. In Section~\ref{sec.models}, using the conditional intensity function, we define a number of models for point processes on directed linear networks all inspired from similar models on the time line. In Section~\ref{sec.data} we use the presented models and methods to analyse simulated datasets and a real dataset consisting of spines along a dendritic tree. Finally, we round off the paper by considering possible extensions and future research directions in Section~\ref{sec.ext}. 

\section{Point processes on directed linear networks}\label{sec.PPLN}

\subsection{Directed linear networks}\label{sec.DLN}

Let $L_i\subseteq\RR^d$, $i = 1,\dots,N$,  denote an open line segment of finite length $|L_i|$, where $\RR^d$ denotes the $d$-dimensional Euclidean space for $d\geq2$. A direction is associated to each line segment $L_i$, where we denote the endpoints of $L_i$ by $\underline{e}_{i}, \overline{e}_{i}\in\RR^d$ such that the direction goes from $\underline{e}_{i}$ to $\overline{e}_{i}$. Furthermore, we assume that the line segments do not overlap
(but they are of course allowed to join at their endpoints in order to form a network). Any point that is the endpoint of at least one line segment is called a vertex. The line segment $L_i$ is an outgoing line segment from the vertex at $\underline{e}_{i}$ and ingoing at the vertex at $\overline{e}_{i}$. The $i$th line segment can conveniently be represented by the parametrisation
\[
u_i(t) = \underline{e}_{i} + t\frac{\overline{e}_{i}-\underline{e}_{i}}{\|\overline{e}_{i}-\underline{e}_{i}\|},
\]
where $t\in(0,|L_i|)$ and $\|\cdot\|$ denotes Euclidean
distance. Occasionally we will consider only parts of a line segment, where $L_i(t_1,t_2)$ denotes the set $\{u_i(t):\	t\in (t_1, t_2)\}$ for $0\leq t_1 \leq t_2 \leq |L_i|$. See Figure~\ref{Fig1} for illustrations of the above concepts.
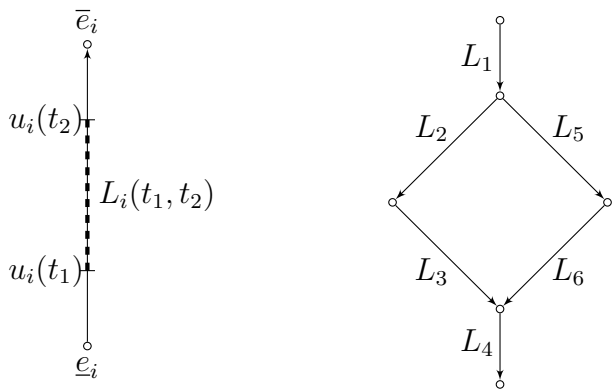
\begin{figure}
	\centering
	\begin{tikzpicture}
	\tikzset{vertex/.style = {shape=circle, draw, inner sep=1pt}}
	\tikzset{edge/.style = {->,> = latex'}}
	\node[vertex] (a) at (0, 0) {};
	\node[vertex] (b) at (0, 4) {};	
	\draw[edge] (a) -- (b);	
	\draw[dashed, ultra thick] (0, 1) -- (0, 3);
	\node[below] at (a) {$\underline{e}_{i}$};
	\node[above] at (b) {$\overline{e}_{i}$};
	\node[right] at (0, 2) {$L_i(t_1, t_2)$};
	\draw (-0.1, 1) -- (0.1, 1) node[left] {$u_i(t_1)$};
	\draw (-0.1, 3) -- (0.1, 3) node[left] {$u_i(t_2)$};
	\end{tikzpicture}\hspace{2cm}
	\begin{tikzpicture}
	\tikzset{vertex/.style = {shape=circle, draw, inner sep=1pt}}
	\tikzset{edge/.style = {->,> = latex'}}
	\node[vertex] (a) at  (0, 4.828427) {};
	\node[vertex] (b) at  (0, 3.828427) {};
	\node[vertex] (c) at  (-1.414214, 2.414214) {};
	\node[vertex] (d) at  (1.414214, 2.414214) {};
	\node[vertex] (e) at (0, 1) {};
	\node[vertex] (f) at (0,0) {};
	\draw[edge] (a) -- (b);
	\draw[edge] (b) -- (c);
	\draw[edge] (b) -- (d);
	\draw[edge] (c) -- (e);
	\draw[edge] (d) -- (e);
	\draw[edge] (e) -- (f);
	\node at (-0.3, 4.328427) {$L_1$};
	\node at (-0.9, 3.4) {$L_2$};
	\node at (-0.9, 1.5) {$L_3$};
	\node at (0.9, 3.4) {$L_5$};
	\node at (0.9, 1.5) {$L_6$};
	\node at (-0.3, 0.55) {$L_4$};
	\end{tikzpicture}
	\caption{Left: a directed line segment $L_i$ with endpoints $\underline{e}_i$ and $\overline{e}_{i}$ and a partial line segment $L_i(t_1, t_2)$ starting in $u_i(t_1)$ and ending in $u_i(t_2)$. Right: a DALN consisting of the line segments $L_1,  \dots, L_6$}
	\label{Fig1}
\end{figure}

The union of the line segments is denoted by $L^\cup = \bigcup_{i=1}^N L_i$, while the set of line segments is denoted by $L = \{L_i:i=1,\ldots,N\}$. The terminology directed linear network may refer to either $L$ or $L^\cup$ depending on the context.

We have used open line segments to build the directed linear
network in order to avoid endpoints being part of multiple
line segments. Obviously, when we later define point processes on
networks, this means that there cannot be any points exactly at the
vertices, but since we will anyway consider only point processes with
a diffuse measure, such points would occur with probability zero. 

\subsection{Directed paths and partial orders}\label{sec:DPPO}

We define a directed path of line segments going from $L_{i}\in L$ to $L_{j}\in L$, where $i\not= j$, in the following way: Let $I\subseteq\{1,\ldots,N\}$ be indices for a subset of $L$ with cardinality $|I|$ and $i, j \in I$. Then $(L_{\psi(1)}, \dots, L_{\psi(|I|)})$ is called a directed path from $L_{i}$ to $L_{j}$ if $\psi:\{1, \dots, |I|\}\rightarrow I$ is a bijection such that $\psi(1)=i$, $\psi(|I|)=j$, and $\underline{e}_{\psi(k)} = \overline{e}_{\psi(k-1)}$ for $k = 2,\ldots,|I|$. In other words, you can get from any point in $L_{i}$ to any point in $L_{j}$ following the directions of the line segments in the directed path. If at least one such directed path exists, we
write $L_{i}\ra L_{j}$. For the DALN in Figure~\ref{Fig1},  one possible directed path from $L_1$ to $L_4$ is $(L_1, L_5, L_6, L_4)$, where $I = \{1, 4, 5, 6\}$ and $\psi$ is specified by $\psi(1) = 1$, $\psi(2) = 5$, $\psi(3) = 6$, and $\psi(4) = 4$.

We extend the notion of a directed path to any pair of points in the
network: Let $u = u_i(t_1)\in L_{i}$ and $v = u_j(t_2) \in L_{j}$ for $L_{i},L_{j}\in L$ (not necessarily distinct), and let
\[
u \ra v \qquad\text{if}\enspace 
\begin{cases}
L_{i} \ra L_{j} & \text{for $i\not=j$},\\
t_1 < t_2 & \text{for $i=j$}.
\end{cases}
\]
A path from $u$ to $v$ is denoted by $p_{u\ra v}$ and consists of 
\[
(L_{\psi(1)}(t_1,|L_{\psi(1)}|), L_{\psi(2)}, \ldots, L_{\psi(|I|-1)}, L_{\psi(|I|)}(0,t_2)),
\]
where the first and last line segments have been restricted to points strictly between $u$ and $v$.
The length of a directed path $p_{u\ra v}$ is the sum of the
lengths of all line segments on the directed path (with the end line
segments restricted as above) and will be denoted $|p_{u\ra v}|$. 
Further, we let $P_{u\ra v}$ denote the set of all directed paths from $u$ to $v$.
If $u\ra v$, the length of the
shortest directed path from  $u$ to $v$ is denoted by
$\ddl(u,v)$, and if $u\not\ra v$ then we let
$\ddl(u,v)=\infty$ (except if $u=v$ in which case
$\ddl(u,v)=0$). Note that $\ddl(\cdot, \cdot)$ is a metric except that
it is not symmetric (i.e.\ it is a quasi-metric).

We restrict attention to a particular class of directed linear networks satisfying that if $u\ra v$ for any $u, v \in L^\cup$, then $v\not\ra u$; that is, there are no directed loops in the network. Such a network is called a directed acyclic linear network (DALN); two examples of DALNs are shown in Figure~\ref{Fig2}. Most results in this paper depend on this assumption. For a DALN, the relation $\ra$ is a strict partial order (i.e.\ it is non-reflexive, anti-symmetric and transitive) either on $L$ (when considering line segments) or on $L^\cup$ (when considering points). Furthermore, we often consider special cases of DALNs for which we use standard terminology from graph theory: A directed linear network without loops (disregarding directions) is called a tree, and a tree where all directions go either away from or towards a single vertex (called the root) is called a rooted tree (or an out-tree when the directions are away from the root and in-tree when they are towards the root). The right panel of Figure~\ref{Fig2} shows an example of an out-tree, where the root is the vertex at the bottom.
\begin{figure}
	\centering
	\begin{tikzpicture}[scale = 0.8]
	\tikzset{vertex/.style = {shape=circle, draw, inner sep=1pt}}
	\tikzset{edge/.style = {->,> = latex'}}
	\node[vertex] (a) at  (0, 6.828427) {};
	\node[vertex] (b) at  (0, 3.828427) {};
	\node[vertex] (c) at  (-1.414214, 2.414214) {};
	\node[vertex] (d) at  (2.645751, 2.414214) {};
	\node[vertex] (e) at (0, 1) {};
	\node[vertex] (f) at (0,0) {};
	\draw[edge] (a) to (b);
	\draw[edge] (b) to (c);
	\draw[edge] (b) to (d);
	\draw[edge] (c) to (e);
	\draw[edge] (d) to (e);
	\draw[edge] (e) to (f);
	\end{tikzpicture}\hspace{2cm}
	\begin{tikzpicture}[scale = 0.8]
	\tikzset{vertex/.style = {shape=circle, draw, inner sep=1pt}}
	\tikzset{edge/.style = {->,> = latex'}}
	\node[vertex] (a) at (0, 0) {};
	\node[vertex] (b) at (0, 2) {};
	\node[vertex] (c) at (1.414214, 3.414214) {};
	\node[vertex] (d) at (1.414214, 6.414214) {};
	\node[vertex] (e) at (4.242641, 6.242641) {};
	\node[vertex] (f) at (-2.121320, 4.121320) {};
	
	\draw[edge] (a) to (b);
	\draw[edge] (b) to (c);
	\draw[edge] (c) to (d);
	\draw[edge] (c) to (e);
	\draw[edge] (b) to (f);
	\end{tikzpicture}
	\caption{Two examples of DALNs. These DALNs are also used in the simulation study in Section~\ref{sec.simdata}}
	\label{Fig2}
\end{figure}
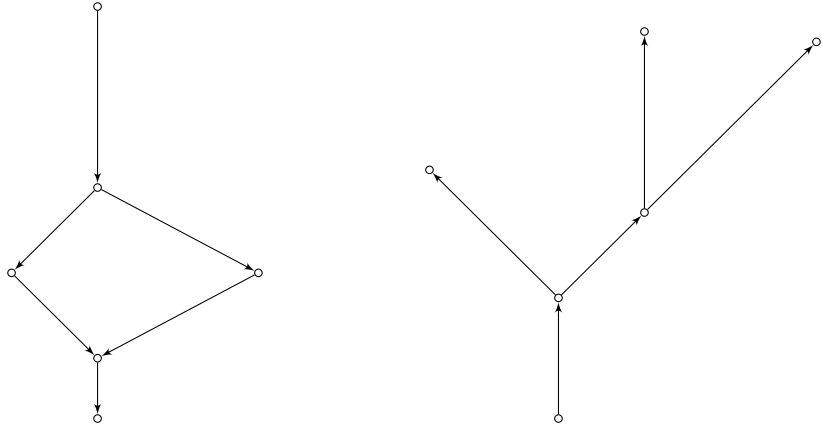

The relation $\ra$ induces a (not necessarily unique) order on all line
segments in a DALN. Let $\omega : \{1,\ldots,N\} \rightarrow \{1,\ldots,N\}$ be a bijection such that
$L_{\omega(i)} \ra L_{\omega(j)}$ whenever $i<j$, i.e.\ we get a new ordering of the line segments $L_{\omega(1)},\ldots,L_{\omega(N)}$ that follows the partial order $\ra$ whenever it applies to a pair of line segments. 
Denote the
set of all such bijections by $\Omega$. A bijection
$\omega\in\Omega$ can be obtained by picking out an arbitrary line
segment, then go against the directions until we reach a line segment
$L_{i}$ where no $L_{j} \ra L_{i}$ for $j\in\{1,\ldots,N\}$
(the existence of such a line segment follows since $L$ is finite and
$\ra$ is a strict partial order). We then let $\omega(1) = i$, such that $L_{\omega(1)}=L_{i}$ is the first line segment in the new ordering of the line segments, and
continue iteratively by considering the network $L\backslash\{L_{i}\}$. For the DALN in Figure~\ref{Fig1}, one choice of $\omega \in \Omega$ is $\omega(1) = 1$, $\omega(2) = 2$, $\omega(3) = 3$, $\omega(4) = 5$, $\omega(5) = 6$, and $\omega(6) = 4$.

While most results in this paper need a choice of
$\omega\in\Omega$, they do not depend on the actual choice of
$\omega$. Throughout the paper, it will be assumed that some choice of $\omega$ has been made whenever needed.

\subsection{Point patterns and point processes on linear networks}\label{sec.pointprocess}

A point pattern on a linear network is a finite set $\bx \subset L^\cup$, and a point process on a linear network is a stochastic process $\bX$ whose realisations are point patterns on the network. If we assume that the network is a DALN, we can use the order induced by $\ra$ to specify a point process by a
conditional intensity function as described in Section~\ref{sec.cif}.

To describe how points in a point pattern on a directed linear network are located relative to each other, we adopt further terms from graph theory. Let $\bx$ be a point pattern on a directed linear network and $x\in\bx$. Then the ancestors of $x$ are the set of points
\[
\an(x) = \{y\in \bx: y\ra x\},
\]
and the parents of $x$, $\pa(x)\subseteq\an(x)$, are the ancestors from which there exists at least one directed path to $x$ containing no other points of $\bx$. The descendants and children of $x$, denoted by $\de(x)$ and $\ch(x)$, are defined similarly, just reversing the direction.

\subsection{Conditional intensity functions on directed linear networks}\label{sec.cif}

Let $L_{\ra i} = \{L_j \in L: L_j \ra L_i\}$ denote the set of line segments with directed paths to $L_i \in L$, and similarly let $L_{\ra i}^\cup = \bigcup_{L_j \in L:L_j\ra L_i} L_j$ denote the union of these. We now define a point process on $L$ by defining a point process on each $L_i$ conditionally on the points in $L_{\ra i}$. 
First, recall that the conditional intensity function $\tilde\lambda^*$ of a temporal point process $\tilde\bX$ is given by
\[
\tilde \lambda^*(t) = \frac{\EE[N([t,t+dt])|\tilde\bX_t]}{dt},
\]
i.e.\ the mean number of points $N([t,t+dt])$ falling in an infinitesimally small time interval $[t,t+dt]$ starting at time $t$ conditional on the point process before time $t$ denoted by $\tilde\bX_t$ \citep[see e.g.][for more details on the conditional intensity function for temporal point processes]{daley-vere-jones-03}. Next, to adapt this concept to directed linear networks, we now let $\tilde \lambda^*$ denote the conditional intensity function of a temporal point process restricted to an interval $(0,|L_i|)$, and allow it to depend on a point pattern on $L_{\ra i}$. The resulting point pattern on $(0, |L_i|)$ is then mapped to $L_i$ by $u_i(t)$. We call this a point process with conditional intensity function $\lambda^*(u)$ for $u\in L_i$, where $\lambda^*(u) = \tilde \lambda^*(t)$ for $u = u_i(t)$.

To obtain a point process on $L^\cup$, we define a point process with
conditional intensity function $\lambda^*$ as above recursively on
$L_{\omega(1)}, \ldots, L_{\omega(N)}$, $\omega\in\Omega$. Note that
following this order ensures that whenever we define a point process
on a line segment~$L_i$, we have already defined it on $L_{\ra i}$, on
which we condition. Further, the conditional intensity does not depend
on the specific choice of permutation $\omega$.

As discussed in Section~\ref{sec.DLN}, our definition of a directed linear network does not include the vertices as a part of the network. However, in practice we may have datasets containing points located exactly on the vertices, e.g.\ if  the location of the points have been used as vertices when approximating the true network with line segments (this is indeed the case for the dendrite data considered in Section~\ref{sec.dendata}). Each of these points need to be allocated to a unique line segment such that the conditional intensity is correctly specified. How to do this depends on the nature of the network $L$. If $L$ is an out-tree where the root is of degree 1, we naturally allocate a point falling at the root to the starting point of the line segment starting in the root. Any other points falling at a vertex will be allocated the endpoint of the ingoing line segment of that vertex. Thus, the line segment going from the root has been extended to include both endpoints, while any other line segment $L_i$ include their second endpoint $\overline{e}_{i}$. Similar modifications can be made to other networks.

\subsection{Marks}\label{sec.marks}

Often additional information, referred to as marks, are associated with each point in a point pattern $\bx = \{x_1, \dots, x_n\}$. Assume that the marks belong to a space $\mathbb{M}$, which we call the mark space, and that each point $x_i \in \bx$  in the observed point pattern has an associated mark $m_i\in\mathbb{M}$. That is, a marked point pattern on a directed linear network is a finite set $\by = \{(x_1,m_1),\ldots,(x_{n},m_{n})\}\subset L^\cup\times\mathbb{M}$.

To define a marked point process, we let a mark associated with the point $u \in L$ follow a distribution, that may depend both on the location $u$ and the marked point process on $L^\cup_{\ra u}= L(t,|L_i|)\cup\bigcup_{L_j \ra L_i} L_j$ for $u = u_i(t)$. The conditional intensity can then be generalised to the marked case by
\[
\lambda^*(u,m) = \lambda^*(u)f^*(m|u),
\]
where $\lambda^*(u)$ is the intensity defined in
Section~\ref{sec.cif}, except that the star now means that it may
depend on marks of points on $L^\cup_{\ra u}$ in addition to the
points themselves, and $f^*({}\cdot{}\mid u)$ is the conditional density function of the mark given the points and marks on $L^\cup_{\ra u}$. Note that in the marked case, by a slight abuse of notation, we let $\lambda^*$ denote the conditional intensity function both depending on the point $u$ and mark $m$ as well as the conditional intensity function depending only on the point $u$.

\section{Likelihood function}\label{sec.likfunc}

We can obtain a closed form expression for the likelihood function for a point process on a DALN specified by a conditional intensity function.  Firstly, consider the measure $\lambda_1$, where $\lambda_1(A)$ is the total length of a measureable subset $A\subseteq L^\cup$. Furthermore, we use the notation $\bx_{(i)} = \bx\cap L_i$
and $\bx_{(\ra i)} = \bx\cap L_{\ra i}$. Finally, assume that the conditional intensity function depends on some parameter vector, say $\bth$.


\begin{prop}\label{prop.lik}
	Consider an unmarked point process $\bX$ on a DALN $L$ specified by a conditional intensity function $\lambda^*$ depending on a parameter vector $\bth$, and let $\bx$ be an observed point pattern dataset. Then the likelihood function is given by
	\[
          {\lik}(\bth; \bx) = \Bigl(\prod_{x \in \bx}
          \lambda^*(x;\bth)\Bigr)
          \exp\Bigl(-\int_L\lambda^*(u;\bth)\dee\lambda_1(u)\Bigr).
	\]	
	Similarly, if $\bY$ is a marked point process with conditional intensity function $\lambda^*$ depending on $\bth$, and $\by$ is an observed marked point pattern, then the likelihood function is given by
	\[
	\lik(\bth;\by) = \Bigl(\prod_{(x, m) \in \by} \lambda^*(x, m;\bth)\Bigr)
	\exp\Bigl(-\int_L\lambda^*(u;\bth)\dee\lambda_1(u)\Bigr).
	\]  
\end{prop}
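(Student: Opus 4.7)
The plan is to exploit the recursive construction of the point process along the order $\omega\in\Omega$ from Section~\ref{sec:DPPO}, so that on each line segment the likelihood reduces to the standard temporal-point-process formula and the pieces assemble into the stated product and integral.

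First I would fix some $\omega\in\Omega$ and, for $k=1,\ldots,N$, denote by $\bX_{(\omega(k))}$ the restriction of $\bX$ to $L_{\omega(k)}$. Because $\omega$ respects the partial order, $L_{\ra\omega(k)}\subseteq L_{\omega(1)}\cup\cdots\cup L_{\omega(k-1)}$, so the joint density of $\bx$ factorises as
\[
\lik(\bth;\bx) = \prod_{k=1}^{N} p_k\bigl(\bx_{(\omega(k))} \,\big|\, \bx_{(\ra\omega(k))};\bth\bigr),
\]
where $p_k$ is the conditional density of $\bX_{(\omega(k))}$ given the point pattern on $L_{\ra\omega(k)}$. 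By construction in Section~\ref{sec.cif}, this conditional law is the image under the parametrisation $u_{\omega(k)}(\cdot)$ of a temporal point process on $(0,|L_{\omega(k)}|)$ with conditional intensity $\tilde\lambda^*$ satisfying $\tilde\lambda^*(t)=\lambda^*(u_{\omega(k)}(t);\bth)$. I would then invoke the classical likelihood expression for temporal point processes (e.g.\ Chapter~7 of \cite{daley-vere-jones-03}):
\[
p_k\bigl(\bx_{(\omega(k))}\,\big|\,\bx_{(\ra\omega(k))};\bth\bigr) = \Bigl(\prod_{x\in \bx_{(\omega(k))}}\lambda^*(x;\bth)\Bigr) \exp\Bigl(-\int_{L_{\omega(k)}}\lambda^*(u;\bth)\dee\lambda_1(u)\Bigr),
\]
after the obvious change of variable from $t\in(0,|L_{\omega(k)}|)$ to arc-length on $L_{\omega(k)}$, which is an isometry and preserves $\lambda_1$. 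Taking the product over $k$ and using that the $L_{\omega(k)}$ partition $L^\cup$ (up to the $\lambda_1$-null set of vertices) turns the sum of integrals into a single integral over $L$ and concatenates the products, yielding the unmarked formula. As a final remark, independence of the result from the choice of $\omega$ is automatic, since both sides of the equality are intrinsic.

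For the marked case I would repeat the argument but apply the marked temporal likelihood on each $L_{\omega(k)}$: each factor $p_k$ picks up an additional product of conditional mark densities $f^*(m\mid x;\bth)$ over $(x,m)\in\by_{(\omega(k))}$, while the integral in the exponent is unchanged because $\int_{\mathbb{M}}f^*(m\mid u)\dee{m}=1$. Combining $\lambda^*(x;\bth)f^*(m\mid x;\bth)=\lambda^*(x,m;\bth)$ then gives the stated expression, the asymmetry between product and exponent (marks appear in the former but not the latter) being exactly the one noted in the proposition.

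The main obstacle I anticipate is not conceptual but bookkeeping: I want to invoke the temporal formula as a black box, so I must verify carefully that the construction of Section~\ref{sec.cif} really delivers, conditional on $\bx_{(\ra\omega(k))}$, a genuine temporal point process on $(0,|L_{\omega(k)}|)$ with the prescribed $\tilde\lambda^*$. This is built into the definition, but I would make the conditioning precise — including how vertex-allocated points (as discussed at the end of Section~\ref{sec.cif}) are incorporated into the conditioning sigma-algebra without affecting $\lambda_1$-integrals — to justify the factorisation and the change of variable rigorously.
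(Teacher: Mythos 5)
Your proposal is correct and follows essentially the same route as the paper: factorise the likelihood along an order $\omega\in\Omega$ respecting the partial order, reduce the conditioning to $\bx_{(\ra\omega(k))}$, and invoke the classical temporal likelihood (Propositions 7.2.III and 7.3.III of Daley and Vere-Jones) on each segment before reassembling the product and the integral. Your additional remarks on the change of variable, the marginalisation of the mark density in the exponent, and the treatment of vertex-allocated points are sound elaborations of steps the paper leaves implicit.
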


\begin{proof}
  Consider first the unmarked case. Letting $\omega \in \Omega$, we
  split the likelihood into a product of density functions for the
  point pattern $\bx_{(\omega(i))}$ on each line segment conditional
  on the points patterns earlier in $\omega$ given by
  $\bx_{(\omega(1))},\ldots,\bx_{(\omega(i-1))}$. That is,
	\[
	\lik(\bth;\bx) = \prod_{i=1}^N
	f(\bx_{(\omega(i))}|\bx_{(\omega(1))},\ldots,\bx_{(\omega(i-1))};\bth).
	\]
	Since
        $\bx_{(\ra i)} \subseteq
        \bigcup_{j=1}^{i-1} \bx_{(\omega(j))}$ and
        $\bx_{(\omega(i))}|\bx_{(\omega(1))},\ldots,\bx_{(\omega(i-1))}$
        by construction depends only on $\bx_{(\ra i)}$, we get that
	\begin{equation}\label{eq.fact}
	\lik(\bth;\bx) = \prod_{i=1}^N f(\bx_{(\omega(i))}|\bx_{(\ra \omega (i))};\bth) =
	\prod_{i=1}^N f(\bx_{(i)}|\bx_{(\ra i)};\bth).
	\end{equation}
	By definition, a point process on $L_i$ specified conditionally on
	$\bx_{(\ra i)}$ by $\lambda^*(u;\bth)$ is equivalent to a temporal point
	process specified by $\tilde\lambda^*(t;\bth)$ on $(0,|L_i|)$, where
	$u=u_i(t)$. Thus, by Proposition 7.2.III in \cite{daley-vere-jones-03}, we get that
	\[
	f(\bx_{(i)}|\bx_{(\ra i)}) = \Bigl(\prod_{x \in \bx_{(i)}} \lambda^*(x;\bth)\Bigr)
	\exp\Bigl(-\int_{L_i}\lambda^*(u;\bth)\dee\lambda_1(u)\Bigr), 
	\]
	which together with \eqref{eq.fact} completes the proof for the unmarked case. The result for the marked case is proven in a similar manner, using Proposition~7.3.III instead of 7.2.III in \cite{daley-vere-jones-03}. 
\end{proof}

\section{Simulation}\label{sec.sim}

There are two general methods for simulating temporal point processes specified by a conditional intensity function: the inverse method and Ogata's modfied thinning algorithm. Both algorithms can be modified to work for a point process on a DALN by simulating the point process on one line segment at a time following the order given by
$\omega\in\Omega$. So we focus on specifying how to simulate the point process on a line segment $L_i$ conditional on the points already simulated on $L_{(\ra i)}$.

First, we consider the inverse method in the unmarked case. Let $u=u_i(t)$ for $t\in
(0,|L_i|)$, and let
\[
\Lambda^*(u) = \int_{L_i(0,t)} \lambda^*(v) \dee\lambda_1(v).
\]
In the inverse method, independent and identically distributed (IID) unit-rate exponential random variables are simulated and transformed into the appropriate points on $L_i$ by the inverse of $\Lambda^*$. More precisely, the algorithm is as follows:
\begin{enumerate}
	\item Let $j=0$
	\item Repeat:
	\begin{enumerate}
		\item Generate $Y_j\sim \mathrm{Exp}(1)$
		\item Find $t$ such that $\Lambda^*(u_i(t)) = \sum_{k=0}^{j} Y_k$ 
		\item If $t < |L_i|$, let $j = j +1$  and $x_j = u_i(t)$. Else end repeat loop
	\end{enumerate}
	\item Output $(x_1,\ldots,x_j)$ 
\end{enumerate}

Next, for Ogata's modified thinning algorithm in the unmarked case, we use that $\lambda^*(u)=\tilde\lambda^*(t)$ when $u=u_i(t)$ and assume that for any $t\in(0,|L_i|)$, there exist functions $L^*(t)>0$ and $M^*(t)\geq\tilde\lambda^*(s)$ for any $s\in[t, t + L^*(t)]$ (here * means that these functions may depend on the already simulated point patterns $\bx_{(\ra i)}$ and the part of $\bx_i$ in $L_i(0,t)$). The algorithm is as follows:
\begin{enumerate}
	\item Let $t=0$ and $j=0$
	\item Repeat:
	\begin{enumerate}
		\item Calculate $M^*(t)$ and $L^*(t)$.
		\item Generate (independently) $T\sim \mathrm{Exp}(M^*(t))$ and
		$U\sim \mathrm{Unif}([0,1])$
		\item If $t+T>|L_i|$, end repeat loop
		\item Else if $T>L^*(t)$, let $t=t+L^*(t)$
		\item Else if $U>\tilde\lambda^*(t+T)/M^*(t)$, let $t=t+T$
		\item Else, let $j=j+1$, $t=t+T$, and $x_j=u_i(t)$
	\end{enumerate}
	\item Output $(x_1,\ldots,x_j)$
\end{enumerate}

Both the inverse method and Ogata's modified simulation algorithm can be extended to the marked case by the following two modifications: First, insert an extra step such that each time a point has been simulated (and either kept for Ogata's modified thinning algorithm or moved for the inverse method), its mark should be simulated using the mark density $f^*$. Second, note that any function with a star may depend on both points and marks, not just points as in the unmarked case. 

The following proposition verifies that both of these algorithms produce a point process on a DALN with the correct distribution in both the unmarked and the marked case.
\begin{prop}
	Let $\omega\in\Omega$ and $L$ be a DALN, and produce point patterns on $L_{\omega(1)},\ldots, L_{\omega(N)}$  recursively 
	using either the inverse method or Ogata's modified simulation algorithm. Then the resulting simulation is a point process on $L$ with conditional intensity function $\lambda^*$.
\end{prop}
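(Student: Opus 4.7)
The plan is to reduce correctness of each DALN algorithm to the well-known correctness of the temporal inverse method and Ogata's modified thinning algorithm, and then piece the line segments together using the recursive construction of Section~\ref{sec.cif}. I would fix $\omega\in\Omega$ and proceed by induction on $i=1,\ldots,N$, with inductive hypothesis that after the algorithm has processed $L_{\omega(1)},\ldots,L_{\omega(i-1)}$, the joint distribution of the simulated points on these line segments coincides with that prescribed by the DALN definition. Because $\omega$ respects $\ra$, every line segment in $L_{\ra \omega(i)}$ lies among $L_{\omega(1)},\ldots,L_{\omega(i-1)}$, so the conditioning information $\bx_{(\ra \omega(i))}$ used when the algorithm turns to $L_{\omega(i)}$ is already available and distributed correctly.

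Next, conditioning on $\bx_{(\ra \omega(i))}$, the point process on $L_{\omega(i)}$ is by definition the image under $u_{\omega(i)}(t)$ of a temporal point process on $(0,|L_{\omega(i)}|)$ with conditional intensity $\tilde\lambda^*(t)$. The inverse method and Ogata's modified thinning algorithm are standard simulation procedures whose correctness in the temporal setting is established in Chapter~7 of \cite{daley-vere-jones-03}; applied to $\tilde\lambda^*$ and transported back to $L_{\omega(i)}$ via $u_{\omega(i)}$, they produce the required conditional distribution on $L_{\omega(i)}$, closing the induction. The marked case follows by the same argument, with the additional step of sampling a mark from $f^*(\cdot\mid u)$ after each accepted point ensuring that the joint distribution of points and marks matches the factorisation $\lambda^*(u,m) = \lambda^*(u)f^*(m\mid u)$ from Section~\ref{sec.marks}; the temporal correctness result invoked is then the marked analogue of the unmarked one, again standard.

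The main obstacle is not conceptual but bookkeeping: one must verify that the notion of "past" used by the algorithms within a single line segment matches what the temporal reference results permit. In the inverse method, $\Lambda^*$ depends only on $\bx_{(\ra \omega(i))}$ and on the points already simulated on $L_{\omega(i)}(0,t)$; in Ogata's algorithm, the bound $M^*$ and the look-ahead window $L^*$ depend on the same information. Both dependencies are exactly those permitted in the temporal case, so the standard proofs transfer without modification, and the fact that the result is independent of the particular $\omega\in\Omega$ is guaranteed by the observation in Section~\ref{sec.cif} that $\lambda^*$ itself does not depend on $\omega$.
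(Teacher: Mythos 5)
Your proposal is correct and follows essentially the same route as the paper's proof: reduce each line segment to the temporal case via $u_{\omega(i)}$, invoke the standard correctness results for the inverse method and Ogata's thinning from \cite{daley-vere-jones-03} and \cite{ogata-81}, and use the fact that $\omega$ respects $\ra$ so that $\bx_{(\ra\omega(i))}$ is already (correctly) simulated when segment $L_{\omega(i)}$ is processed. Your explicit induction on $i$ and the check that the within-segment notion of ``past'' matches the temporal setting merely make precise what the paper states more tersely.
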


\begin{proof}
	Consider first the inverse method in the unmarked case used on a single line segment $L_i$. By Theorem 7.4.I in \cite{daley-vere-jones-03} the algorithm produces a simulation on $(0,|L_i|)$ with conditional intensity $\tilde\lambda^*$ when we consider $\Lambda^*(u_i(t))$ as a function of $t$. By the definition of
	$\lambda^*(u)=\tilde\lambda^*(t)$ for $u=u_i(t)$, we then get a correct simulation on $L_i$.
	
	Consider next Ogata's modified thinning algorithm in the unmarked case used on $L_i$. By \cite{ogata-81} and since $\lambda^*(u)=\tilde\lambda^*(t)$ for $u=u_i(t)$, a correct simulation is produced on $L_i$.
	
	Finally, since we follow the order given by $\omega$, $\bx_{(\ra i)}$ has always been simulated, when $\bx_{(i)}$ has to be simulated, and thus by the above argument each $\bx_{(i)}$ is simulated correctly, leading to a correct simulation of $\bx$ in the unmarked case.
	
	Turning to the marked case, we note that the above arguments still hold to prove that the points follow the correct distribution (where Proposition 7.4.IV in \cite{daley-vere-jones-03} needs to be used for inverse method, and the text accompanying Algorithm 7.5.V for Ogata's modified thinning algorithm). The proof is completed for the marked case by noting that the marks are always drawn from the correct distribution.
\end{proof}

\section{Residual analysis}\label{sec:residual_analysis}

One way of checking the fit of a model specified by a conditional intensity is to  calculate residuals and check their distribution. Consider first the unmarked case, and assume that we have observed a point pattern $(x_{(i),1},\ldots,x_{(i),n_i})$ on $L_i$ for every $L_i\in L$, and that we have obtained a fitted model with conditional intensity function $\hat\lambda^*$ for this dataset. Let
\[
\hat\Lambda^*(u) = \int_{L_i(0,t)} \hat\lambda^*(v)\dee\lambda_1(v) 
\]
where $u=u_i(t)$ and $t\in(0,|L_i|)$. We then calculate the residuals for the points on $L_i$ given by $(\hat\Lambda^*(x_{(i),1}),\ldots,\hat\Lambda^*(x_{(i),n_i}))$, which we (with a slight abuse of notation) denote by $\hat\Lambda^*(\bx_{(i)})$. If the model is correct, then, by Proposition~7.4.IV in \cite{daley-vere-jones-03}, $\hat\Lambda^*(\bx_{(i)})$ is a unit-rate Poisson process on the interval
$(0,\hat\Lambda^*(\overline{e}_i))$, and the residual processes $\hat\Lambda^*(\bx_{(i)})$ are independent for $i=1,\ldots,N$. 

In order to check the fit of the model given by $\hat\lambda^*$, we need to check whether the residuals form a Poisson process. The question is now how we best check this. One possibility is to check each process $\hat\Lambda^*(\bx_{(i)})$ separately, but there may be few points in each $\hat\Lambda^*(\bx_{(i)})$ and further, we lose information on any discrepancies around the junctions. For example, a particularly large gap around a junction may indicate a discrepancy between the model and the data, but since this gap will then be divided over several $\hat\Lambda^*(\bx_{(i)})$ it may be hard to discover. A better approach might be to construct a network with the same connecting junctions as the original network, and place the residuals on this network. The main complication is that the lengths of the line segments have changed from $|L_i|$ to $\hat\Lambda^*(|L_i|)$, so we cannot use the original network, and indeed the changed lengths may imply that there exists no network in $\RR^d$ with the correct line segments lengths and the correct connecting junctions. However, we can consider this network in a more abstract sense and apply any method for checking that a point pattern on a network comes from a unit-rate Poisson process, provided that the method does not rely on a correct Euclidean geometry of the network. 

One method for testing whether the residuals follow a unit-rate Poisson process model 
is to perform a global rank envelope test \citep{myllymaki-17} with the empirical geometrically corrected $K$-function or pair correlation function \citep{ang-etal-12} as test function. Note that this approach effectively ignores the directions present in the network (see Section~\ref{sec.ext} for further comments on including directions in the $K$-function).

Another method is based on interevent distances. To define these for a directed linear network, first recall that for point processes on the time line specified by a conditional intensity function, residual analysis often includes an investigation of the interevent times, i.e.\ the times between consecutive points of the residual process. If the proposed model is correct, the residuals constitute a unit-rate Poisson process which means that the interevent times are IID exponential variables with mean 1. In practice this can e.g.\ be checked visually by considering Q-Q-plots or histograms. For a point pattern $\bx$ on a DALN $L$, we can define a similar concept, the interevent distances, as the set 
\[
\{\ddl(x_{i},x_{j}):x_i,x_j\in\bx,\,x_i\in\pa(x_j)\},
\]
that is, the distance(s) to a point from its parent(s). If $L$ is an out-tree there is at most one parent for each point in $\bx$.  
For a unit-rate Poisson process on $L$, the interevent distances that corresponds to the distance between two consecutive points on the \textit{same} line segment are independent exponentially distributed variables with mean 1. However, interevent distances going across the same junction are not independent, since a part of the network is shared by the intervals corresponding to the interevent distance. One possible solution is to exclude all such interevent distances when comparing interevent distances to the exponential distribution,  but then information around the junctions is lost. Another solution is to consider all interevent distances and thus ignoring the dependency (which in practice may occur only for a small portion of the interevent distances depending on the number of junctions).

Generalizing this to the marked case in a sensible way is tricky. If we focus on the multivariate case, i.e.\ when the mark space is finite, some progress can be made. Assume that we have estimated the conditional intensity function by $\hat \lambda(\cdot, \cdot)$, and let 
\[
\hat\Lambda_m^*(u) = \int_{L_i(0,t)} \hat\lambda^*(v,m)
\dee\lambda_1(v)
\]
for a fixed mark $m \in \mathbb{M}$. Then all points with mark $m$ on a line segment $L_i$ is transformed using $\hat\Lambda_m^*$ to an interval $(0,\hat\Lambda_m^*(|L_i|))$. Note in particular that the intervals have different lengths, so the residuals for points with different marks end up in differently sized networks. By Proposition 7.4.VI.(a) in \cite{daley-vere-jones-03}, the residual processes thus obtained for different marks should behave like independent unit-rate Poisson processes provided the model is fitting well. We can apply the above techniques to each process separately to check whether these are unit-rate Poisson processes. Ideally we should also check whether each of these processes are independent of each other, but it seems to be hard to make a general test for this, since the processes are located on different networks.

The fact that points with different marks end up in different networks for the multivariate case hints at the difficulty in getting anything useful out of residual analysis for the general marked case. Proposition 7.4.VI.(b) in \cite{daley-vere-jones-03} can be used in this case, but if we for example have a continuous mark distribution, typically no marks are equal, so each residual point will end up in intervals of different lengths, and it is in no way obvious how to combine this into something useful for model checking.

\section{Models}\label{sec.models}

New models for point processes on a DALN specified by a conditional intensity function essentially boils down to giving a mathematical expression for the conditional intensity function. There is a rich selection of standard models for temporal point processes that can be
expressed using the conditional intensity function. The main problem in adapting them to the case of a DALN is dealing with the fact that at junctions the network may join several line segments and/or split into several line segments. We consider a few examples of models here.

\subsection{Poisson process}\label{sec.pois}

If the conditional intensity function $\lambda^*$ is a
deterministic non-negative valued measurable function on $L$, say $\lambda$, that does not depend on points further up the network, then we get a Poisson process on $L$ with intensity function $\lambda$. For this particular model, the point process does not depend on the directions and is equivalent to a Poisson process
specified on an undirected linear network \citep[see e.g.][]{ang-etal-12}.

For a homogeneous Poisson process on $L$ with constant intensity $\lambda$, the maximum likelihood estimate of $\lambda$ is simply $n/|L|$, where $n$ is the observed number of points (this follows from Proposition~\ref{prop.lik}).  

\subsection{Hawkes process}\label{sec.hawkes}

Another common temporal point process is the
Hawkes process or self-exciting process \citep{hawkes-71a,hawkes-71b,hawkes-72,hawkes-oakes-74}. We can extend it to a DALN using the conditional intensity function
\begin{equation}\label{eq.hawkes}
\lambda^*(u) = \mu + \alpha\sum_{x_i \in \bx : \, x_i\ra u} \gamma(\ddl(x_i,u)),
\end{equation}
where $\mu,\alpha>0$ are parameters and $\gamma$ is a density function
on $(0,\infty)$ that may depend on additional parameters. In the temporal case the model has the interpretation
that immigrants appear according to a Poisson process with intensity
$\mu$, then each immigrant, say $t_i$, produces a Poisson process of
offspring with intensity $\alpha\gamma(\cdot-t_i)$, and each offspring
produces another Poisson process of offspring, and so on \citep{moller-rasmussen-05,moller-rasmussen-06}.
In particular $\alpha$ can be interpreted as the mean
number of offsprings produced by each point. However, for a DALN containing a diverging junction, that is, a vertex with in-degree $= 1$ and out-degree $>1$, the offspring process is copied to
each outgoing direction, thus giving many more offsprings in mean. In
the case that there are multiple paths from $x_i$ to $u$ only the
shortest path count, meaning that clusters die out if they encounter
themselves further down the network.

If we want a version of the Hawkes process where clusters are split
equally when a diverging junction is met, and superposed when a
converging junction, i.e.\ a vertex with indegree $>1$ and outdegree $= 1$, is met, we can let
\begin{equation}\label{eq.modhaw}
\lambda^*(u) = \mu + \alpha\sum_{x_i \in \bx : \, x_i\ra u}\ \sum_{p\in P_{x_i\ra u}} g_p(x_i, u) \gamma(|p|),
\end{equation}
where $g_p(x_i,u)=1/n_p(x_i,u)$, and $n_p(x_i,u)$ is the product of the number of outgoing line segments met on each junction on the path $p$. For this model, $\alpha$ is the mean number of offspring resulting from each point (or more precisely, the mean number of offspring is less than or equal to $\alpha$ since the network is finite, and the
offspring processes thus get truncated). Using other functions $g_j$ may give other interpretations that are useful for various datasets.

For $u = u_i(t)$, the integrated conditional intensity for \eqref{eq.hawkes} is given by
\begin{align*}
\Lambda^*(u) = \mu t + \alpha \Bigl[\sum_{x_j \in \bx: \, x_j \to u} \Gamma(\ddl(x_{j}, u))  - \sum_{x_j \in \bx : \, x_j \in L^{\cup}_{\to i}}\Gamma(\ddl(x_j, \underline{e}_{i}))\Bigr],
\end{align*}
where $\Gamma$ is the distribution function associated with $\gamma$.

\begin{figure}
	\includegraphics[width=0.33\textwidth, trim={3cm 0 3cm 0}, clip]{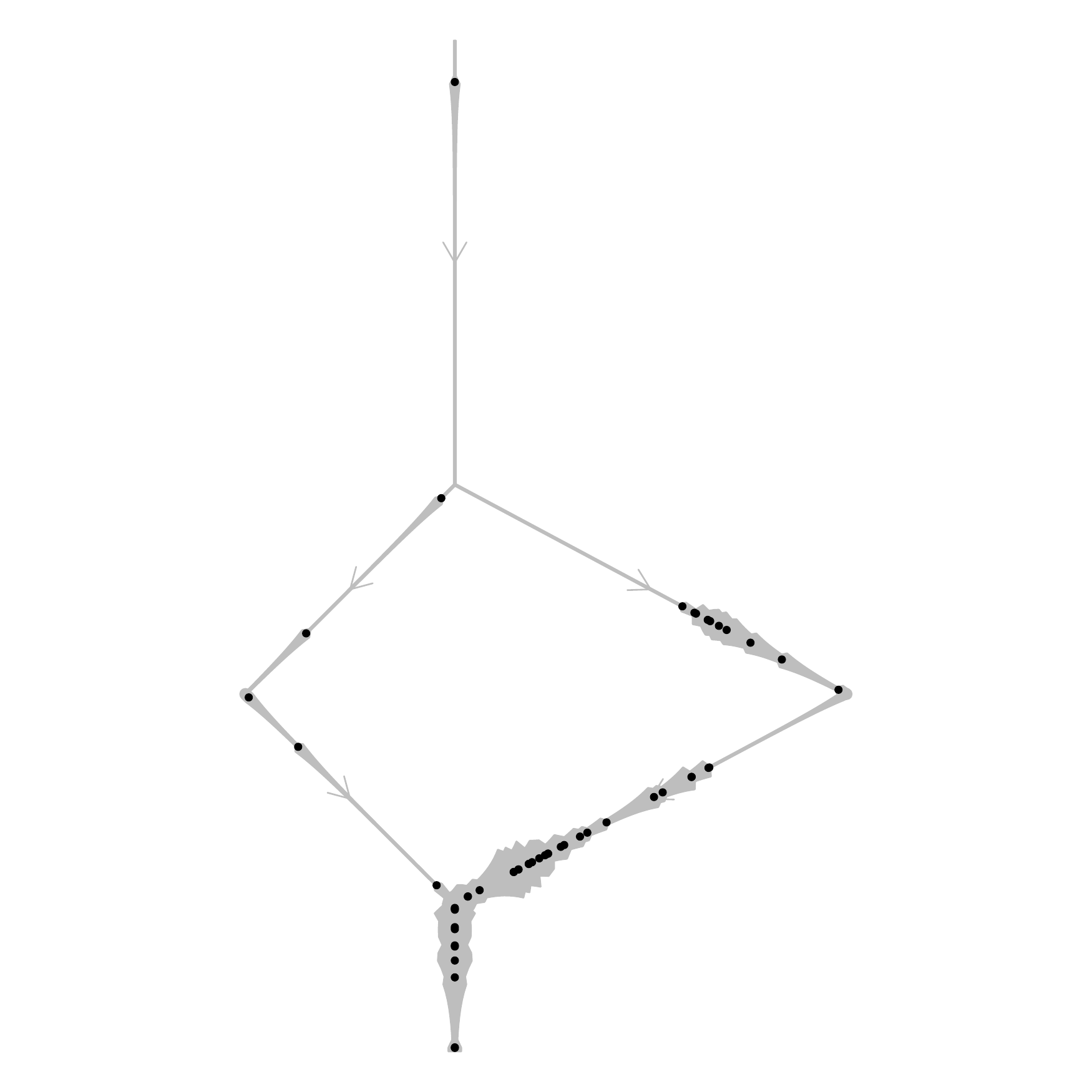}%
	\includegraphics[width=0.33\textwidth, trim={3cm 0 3cm 0}, clip]{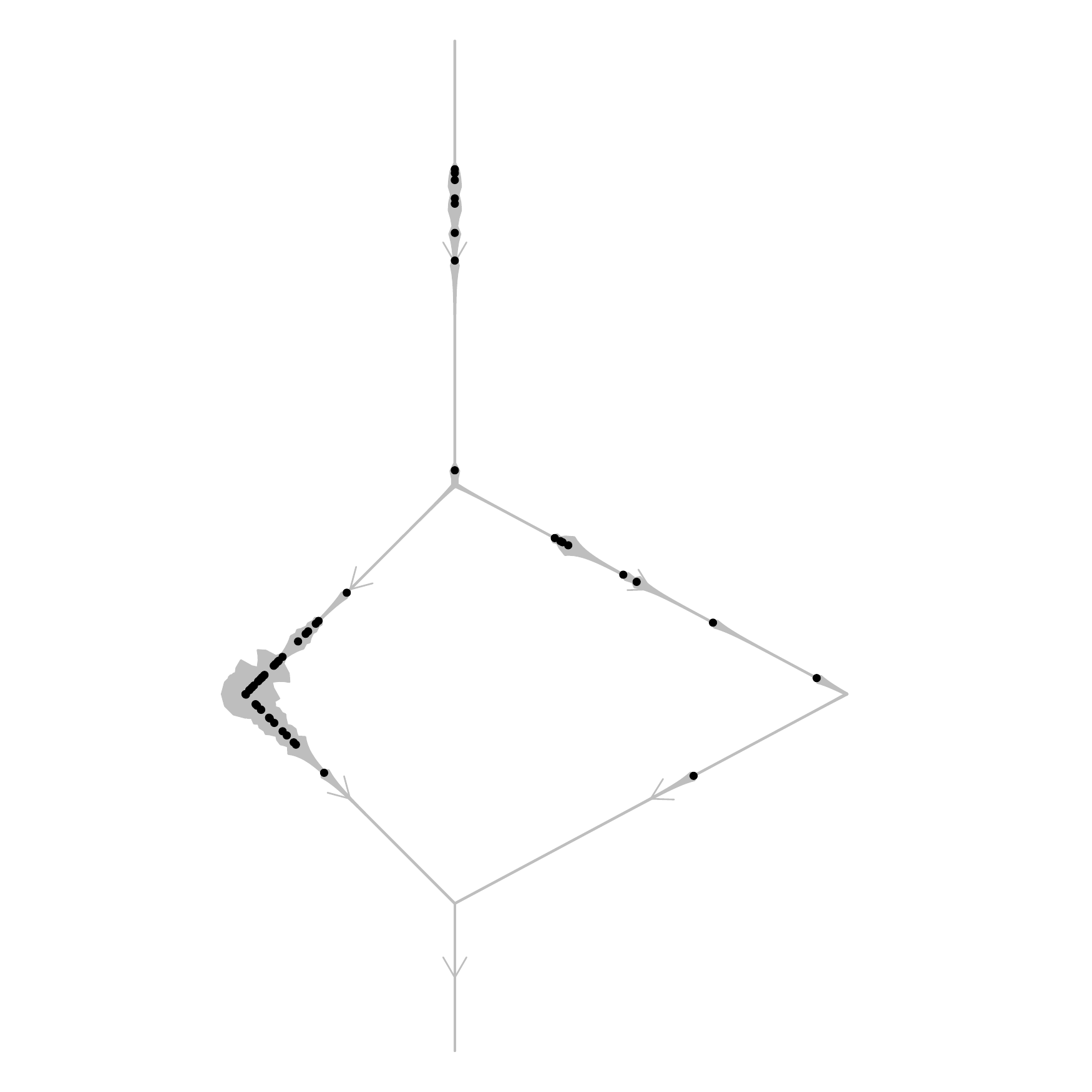}%
	\includegraphics[width=0.33\textwidth, trim={3cm 0 3cm 0}, clip]{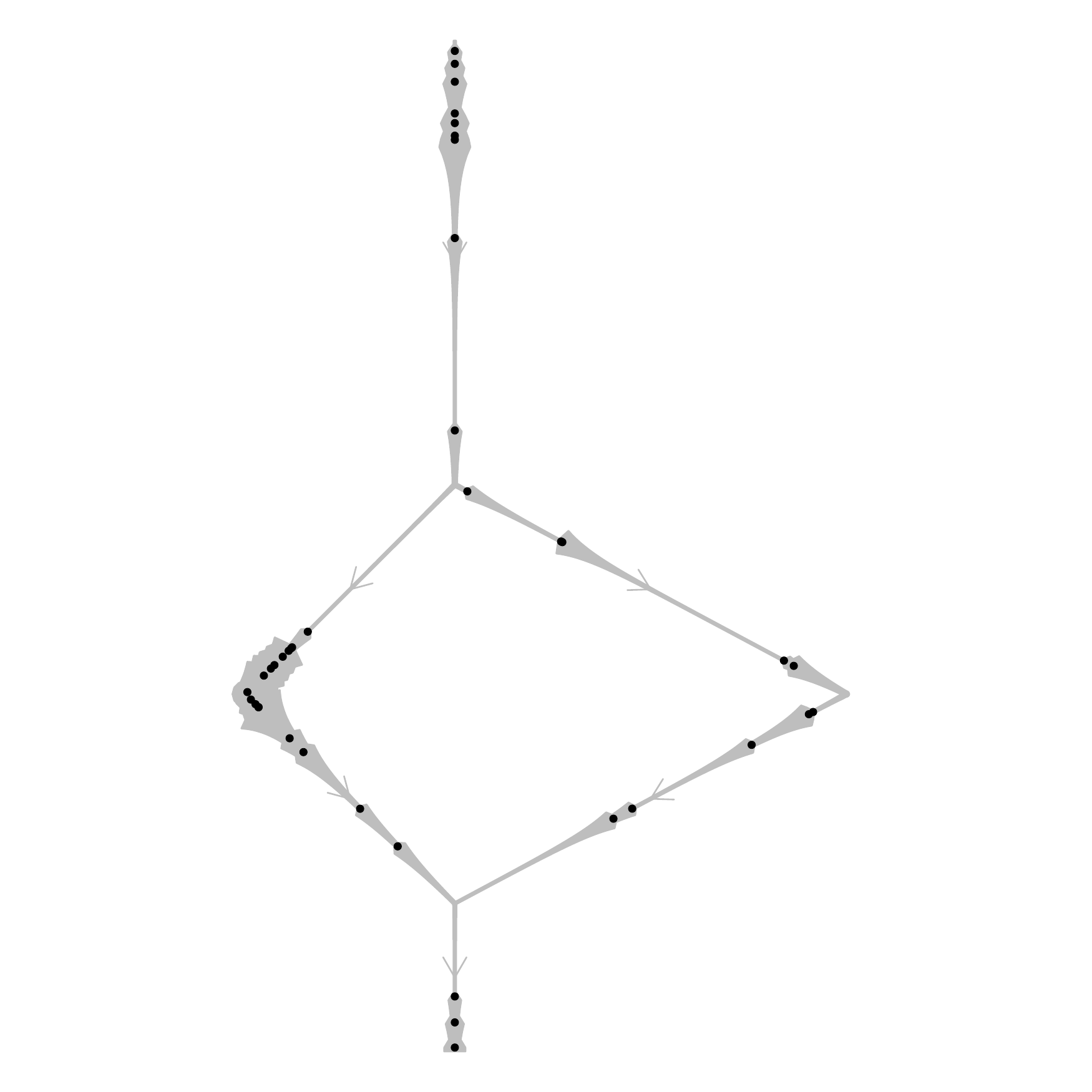}
	\caption{Simulations of Hawkes processes (on the DALN left of Figure~\ref{Fig2}) with parameters specified as follows. Left: $\mu = 1$, $\alpha = 0.8$ and $\gamma(t) = 5\exp(-5t)$. Middle: $\mu =1 $, $\alpha = 0.8$, and $\gamma(t) = 10\exp(-10t)$. Right: $\mu = 1$, $\alpha = 0.9$, and $\gamma(t) = 5\exp(-5t)$. Width of grey regions are proportional to the conditional intensity}
\end{figure}

\subsection{Non-linear Hawkes process}\label{sec.nlh}

A non-linear Hawkes process \citep{bremaud-massoulie-94,bremaud-massoulie-96} is obtained by inserting the conditional intensity function of the Hawkes process into a function $g:\mathbb{R}\rightarrow[0,\infty)$ such as the exponential function, that is,
\begin{equation}\label{eq.nonlinhaw}
\lambda^*(u) = \exp\Bigl[\mu + \alpha\sum_{x_i\in \bx : \, x_i\ra u}
\gamma(\ddl(x_i, u))\Bigr].
\end{equation}
The non-linear Hawkes process does not have a clustering and branching structure as the Hawkes process, so the modification done in \eqref{eq.modhaw} do not lead to any nice interpretations. On the other hand, if $\alpha>0$ the point process given by the conditional intensity function \eqref{eq.nonlinhaw} is clustered, if $\alpha<0$ it is regular, and if $\alpha=0$ it is a homogeneous Poisson process, so the model is rather flexible.

\subsection{Self-correcting process}\label{sec.selfcorr}
To model regular point patterns on DALNs, we further introduce a modification of the self-correcting process from the temporal setting \citep{isham-westcott-79}. The conditional intensity of a self-correcting process increases exponentially as the distance to the starting point increases, while it decreases whenever a point occurs. To adapt such a process to a DALN, we need to specify a meaningful starting point from which we measure distance. Therefore, we require that the DALN $L$ has a vertex $v_0$ such that $\ddl(v_0, u) < \infty$ for all $u \in L$. Note, for an out-tree, $v_0$ is simply the root of the tree, and for consistency we use the terminology root for $v_0$ even if the network is not an out-tree. 

Then we specify the self-correcting process by
\begin{align}\label{eq:cond_int_selfcorrecting}
\lambda^*(u) = \exp\left\{\mu\ddl(v_0, u) - \alpha |\bx \cap \mathrm{sp}(v_0,u)|\right\},
\end{align}
where $|\bx \cap \mathrm{sp}(v_0,u)|$ is the number of points from $\bx$ on the shortest directed path $\mathrm{sp}(v_0,u)={\arg\min}_{p\in P_{v_0\ra u}}|p|$ from $v_0$ to $u$, and $\mu, \alpha > 0$ are parameters controlling the overall intensity and the degree of repulsion. 
With this definition, only the points lying on the shortest directed path between  $v_0$ and $u$ affect $\lambda^*(u)$. For networks with paths of same length joining $u_1$ and $u_2$, the conditional intensity specified by \eqref{eq:cond_int_selfcorrecting} is somewhat ambiguous as $\mathrm{sp}(u_1, u_2)$ is not necessarily unique. An alternative, that may be more natural for some applications, is to count events on all paths from $v_0$ to $u$ (and not only on the shortest directed path). However, if $\bX$ is specified by \eqref{eq:cond_int_selfcorrecting}, the restriction of $\bX$ to $p_{v_0\ra u}$ for any $u \in L$ is a temporal self-correcting process on the interval $(0, |p_{v_0\ra u}|)$.

Another possible alteration of \eqref{eq:cond_int_selfcorrecting}, is to substitute the exponential function with some positive function $g$.

To obtain an expression for $\Lambda^*$, let $\{x^i_1, \ldots, x^i_{n_i}\} = \bx \cap L_i(0, t)$ denote the $n_i$ events falling on the partial line segment $L_i(0, t)$, while $x^i_0 = u_i(0)$ and $x^i_{n_i + 1} = u_i(t)$ denote the endpoints of $L_i(0, t)$. 
Then, for $u = u_i(t)$,
\[
\Lambda^*(u) =  \frac{c(\bx, i)}{\mu}\sum_{j = 0}^{n_i} \exp(-\alpha j) \left\{\exp\left[\mu \ddl(x_{0}^i, x_{j+1}^i)\right] - \exp\left[\mu  \ddl(x_{0}^i, x_{j}^i)\right]\right\},
\]
where $c(\bx, i) = \exp\left\{\mu \ddl(v_0, x_0^i) - \alpha |\bx \cap \mathrm{sp}(v_0,x_0^i)|\right\}$.

\begin{figure}
	\includegraphics[width=0.33\textwidth, trim={3cm 0 3cm 0}, clip]{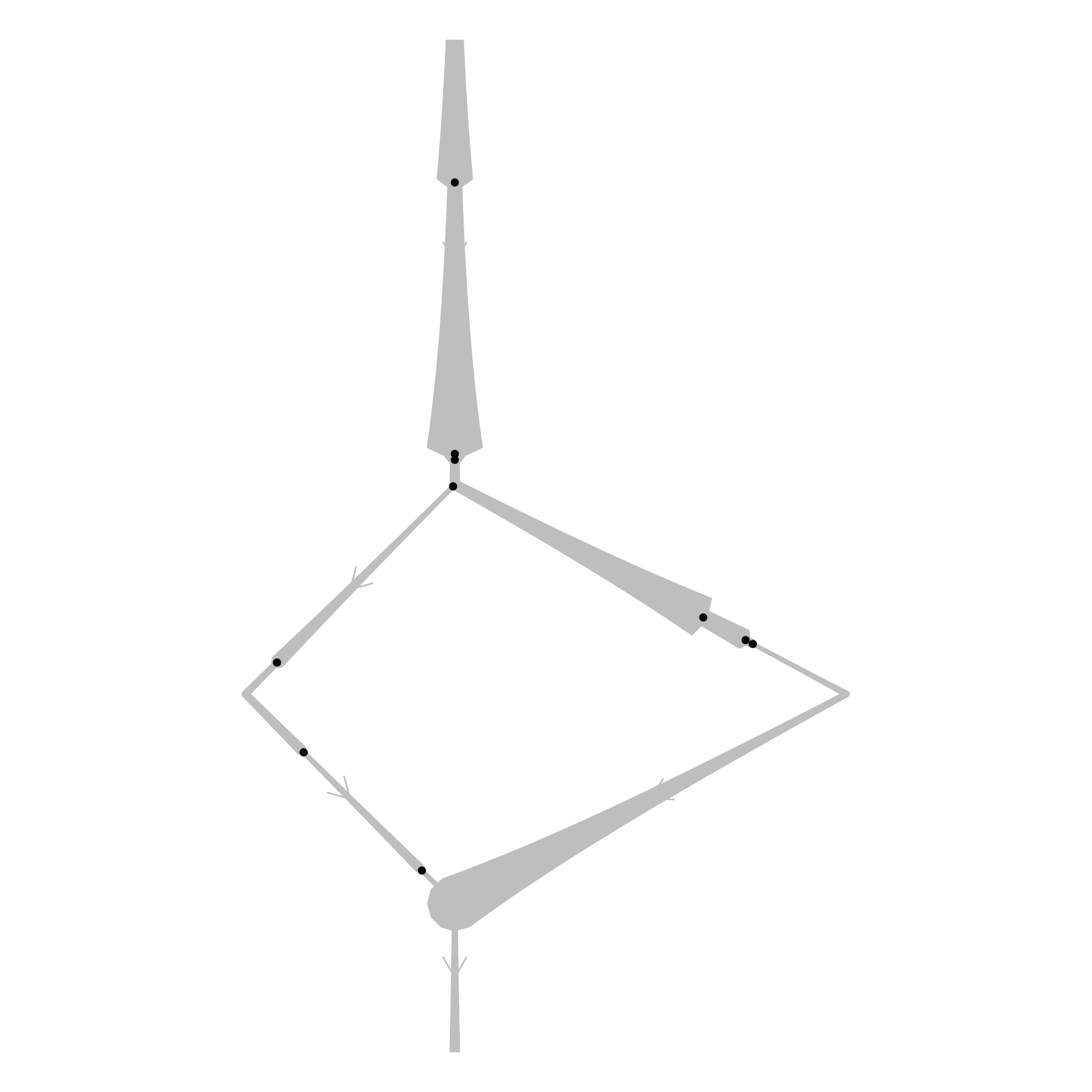}%
	\includegraphics[width=0.33\textwidth, trim={3cm 0 3cm 0}, clip]{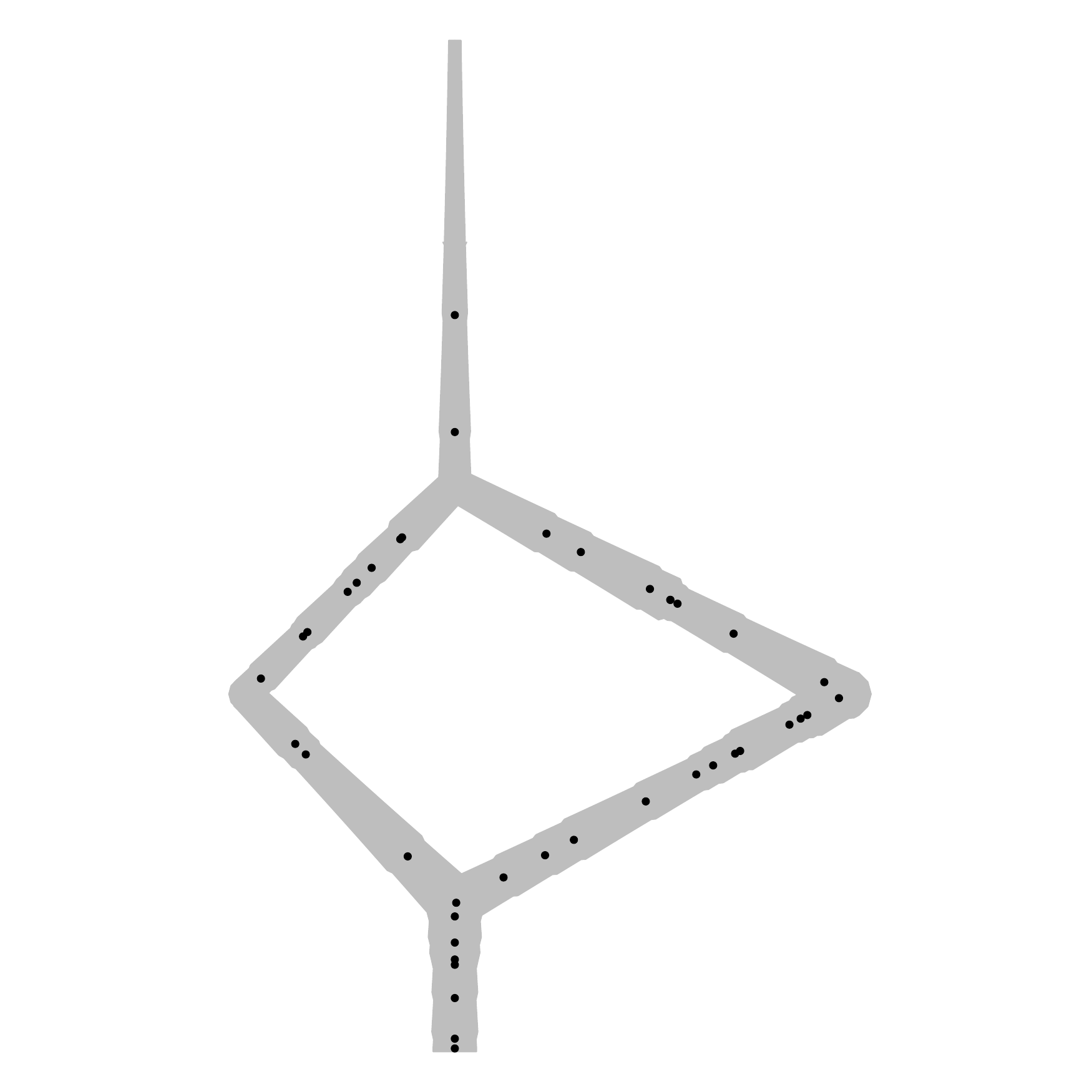}%
	\includegraphics[width=0.33\textwidth, trim={3cm 0 3cm 0}, clip]{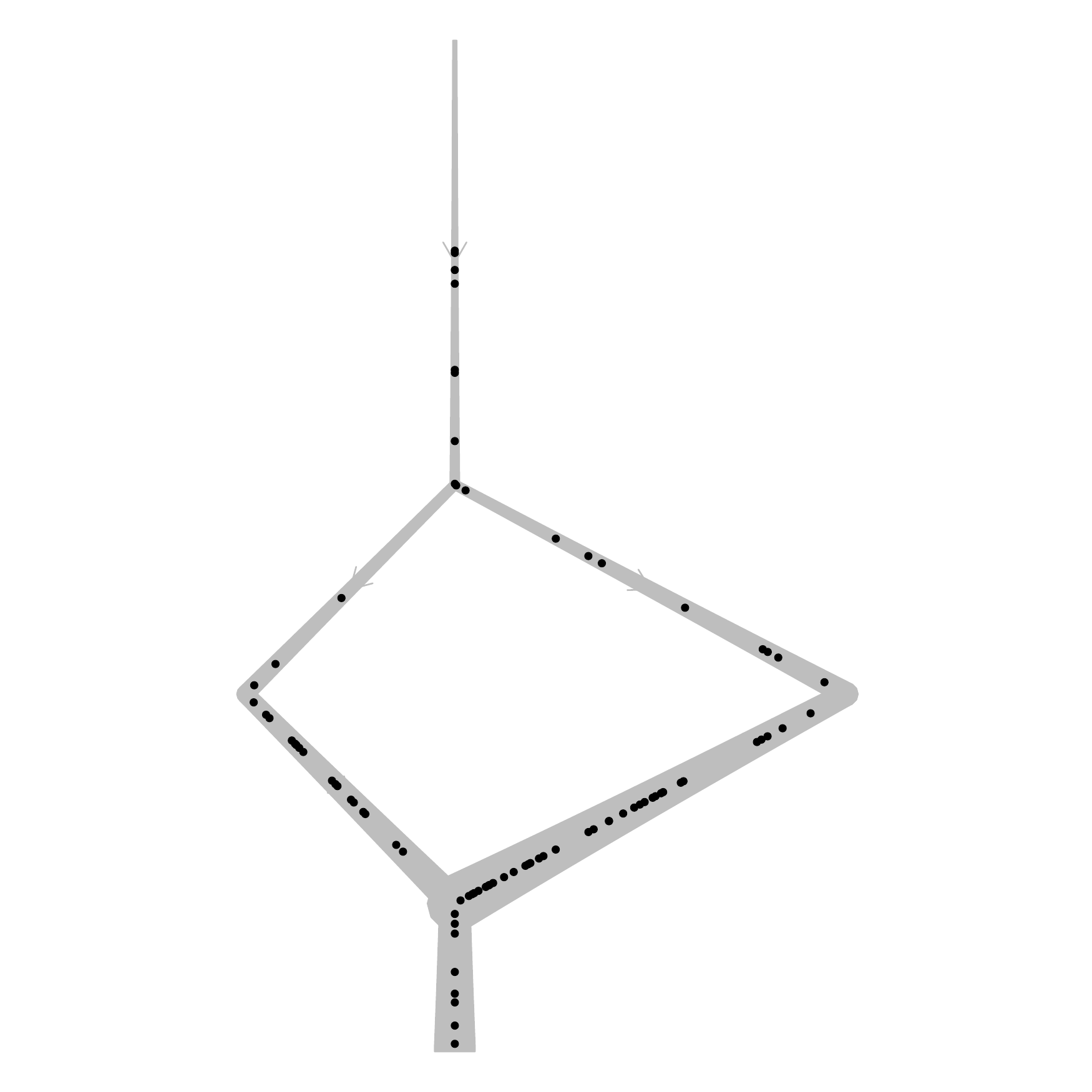}
	\caption{Simulations of self-correcting processes (on the DALN left of Figure~\ref{Fig2}) with parameters specified as follows. Left: $\mu = 0.8$ and $\alpha = 1$. Middle: $\mu = 0.4$ and $\alpha = 0.1$. Right: $\mu = 0.3$ and $\alpha = 0$}
\end{figure}

\subsection{Marked models}\label{sec.marked_models}

Any of the models in Sections~\ref{sec.pois}--\ref{sec.selfcorr} can be extended to the marked set up. The simplest case is to use so-called independent marks \citep[see e.g.][]{daley-vere-jones-03}, where the marks are independent of each other and independent of the points, with the sole exception that a mark is allowed to depend on the location of the point to which it is associated.
More interesting cases can be made by letting the conditional intensity at $u$ depend on the marks associated to the points in $L_{\ra u}$ (this is known as unpredictable marks if the other independence assumptions mentioned above still hold) and/or letting the mark associated to a point at $u$ depend on points on $L_{\ra u}$ and/or their associated marks. 

For an example of a marked point process, consider the Hawkes process given by \eqref{eq.hawkes} and assume that we are trying to model a dataset that has $K$ different types of points, denoted $1,\ldots,K$. Then the marked Hawkes process can be defined using the conditional intensity function 
\begin{equation}\label{eq.markedhawkes}
\lambda^*(u,m) = \mu_m + \sum_{x_i \in \bx: \, x_i\ra u} \alpha_{m_i,m} \gamma_{m_i,m}(\ddl(x_i,u)),
\end{equation}
where for $m,m'\in\{1,\ldots,K\}$ the parameters in the model are
given by $\mu_m, \alpha_{m,m'}>\nobreak 0$, and $\gamma_{m,m'}$ are
density functions on $(0,\infty)$. This generalization has a high
number of parameters, and for practical use assumptions that some of
these parameters are equal would typically be made.

Similarly, the other models presented in this paper can be extended to multitype cases or more general marked cases, and the primary difficulty is producing practically relevant models with nice interpretations and a reasonably low number of parameters. Obviously, what this is depends on the data at hand.

\section{Data Analysis}\label{sec.data}

\subsection{Simulated data}\label{sec.simdata}

To investigate properties of the maximum likelihood estimates for
parameters in the Hawkes and self-correcting model, we performed a
simulation study using the two DALNs shown in Figure~\ref{Fig2}. As
results for the two networks are very similar, we only present results
for the DALN to the left in Figure~\ref{Fig2}. In order to investigate
increasing-domain asymptotic properties, we increase the size of the
network seven times by $50\%$ each time and denote the resulting
networks by sizes $s=1,\ldots,7$. For each $s$ we simulate 1000 Hawkes
processes $\mu = 1$, $\alpha = 0.8$, and
$\gamma(t; \kappa) = \kappa\exp(-\kappa t)$, where $\kappa = 5$, and
1000 self-correcting processes with $\mu = 0.4$ and $\alpha = 0.1$
using the inverse method. For each simulation, the parameters have
been estimated (1) jointly, by numerically maximising the
log-likelihood simultaneously for all parameters, and (2) marginally,
by fixing all but one parameter at the true value and then numerically
maximising the log-likelihood with respect to the remaining parameter.

Figure~\ref{fig:results_Hawkes1_simulation_study} shows box plots of the joint estimates for the simulated Hawkes processes; these suggest that the maximum likelihood estimator of $(\alpha, \mu, \kappa)$ is consistent. Estimating the parameters marginally give similar results (not shown here) but with a slightly lower empirical variance.

For the simulated self-correcting processes, the joint estimates shown in Figure~\ref{fig:results_Selfcorrecting1_simulation_study}--\ref{fig:results_Selfcorrecting1_alphaVSmu} are clearly positively correlated, and both $\mu$ and $\alpha$ are grossly overestimated. This behaviour may be explained by the way $\mu$ and $\alpha$ influence the conditional intensity in \eqref{eq:cond_int_selfcorrecting}. Specifically, $\mu$ controls how much the conditional intensity increases as the distance to the root grows, while $\alpha$ determines how much the conditional intensity decreases when a new point is met. As more points will occur when the distance to the root grows, an increase in $\mu$ may to some extent be balanced out by an increase in $\alpha$. The ridge seen in Figure~\ref{fig:results_Selfcorrecting1_loglik}, displaying contours of the log-likelihood for one of the simulations, confirms that it may be hard to identify the true values of $\alpha$ and $\mu$ as the estimates will be chosen somewhere along that ridge. 
The marginal estimates, shown in first panel of Figure~\ref{fig:results_Selfcorrecting1_simulation_study}, are less extreme and on average closer to the true value. Specifically, fixing $\alpha$, the marginal estimates of $\mu$ seem unbiased, while fixing $\mu$ give positively biased estimates of $\alpha$ but with a smaller bias as the network grows. 

This short simulation study, indicates that the behaviour of the maximum likelihood estimates are quite model dependent, and thus it may be hard to say anything about the distribution of these in general. This is discussed further in Section~\ref{sec.ext}.

\begin{figure}
	\centering
	\includegraphics[width=0.4\textwidth]{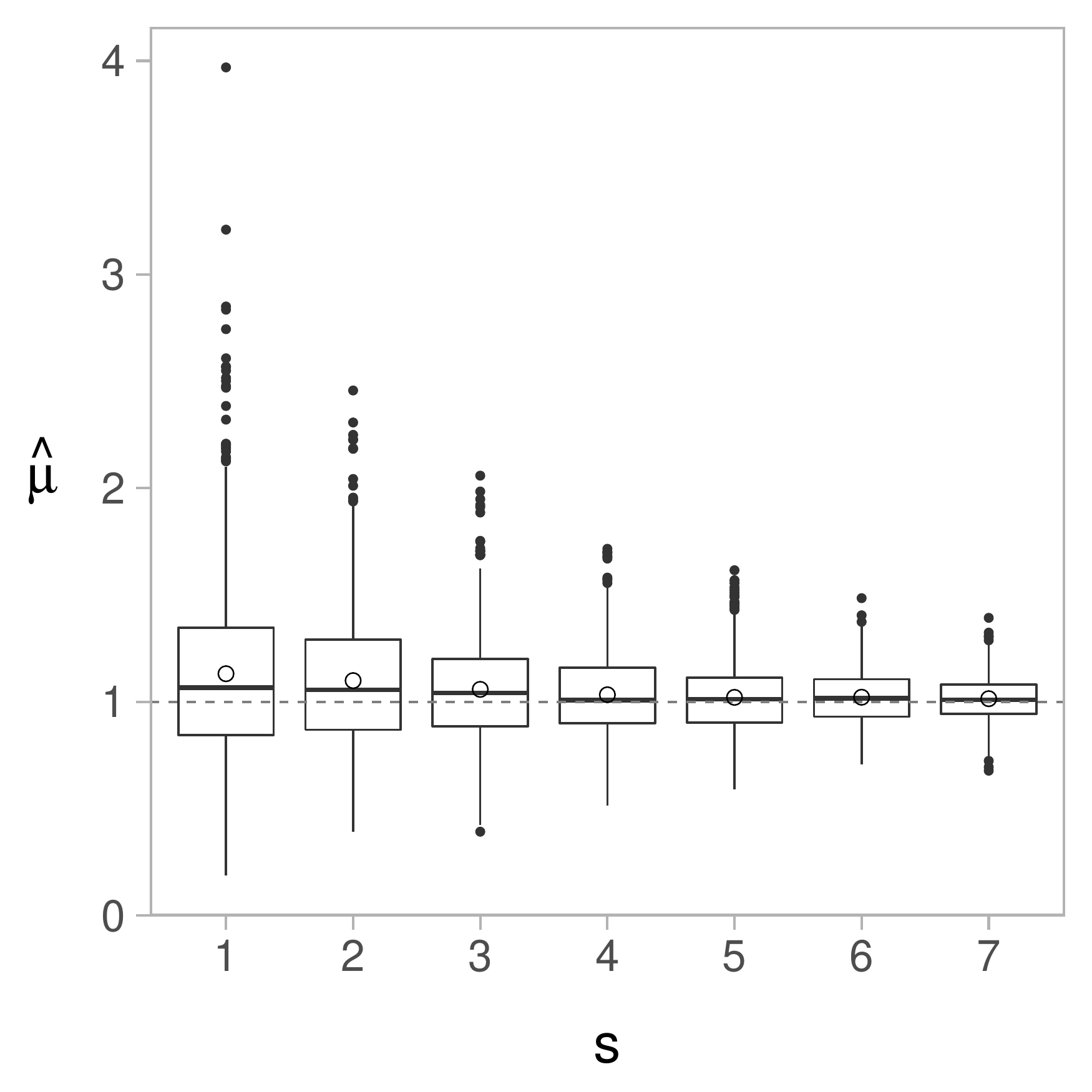}
	\includegraphics[width=0.4\textwidth]{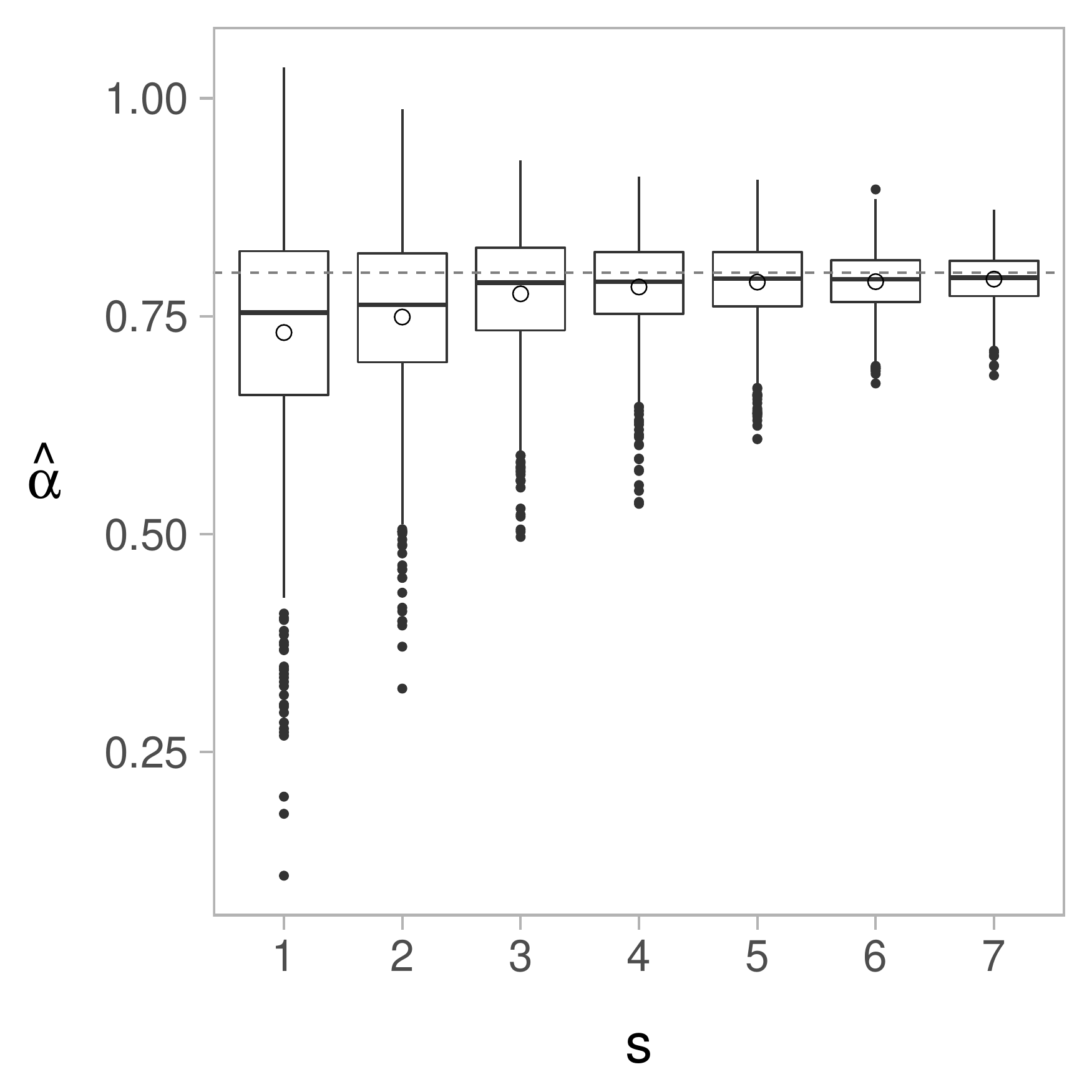}
	\includegraphics[width=0.4\textwidth]{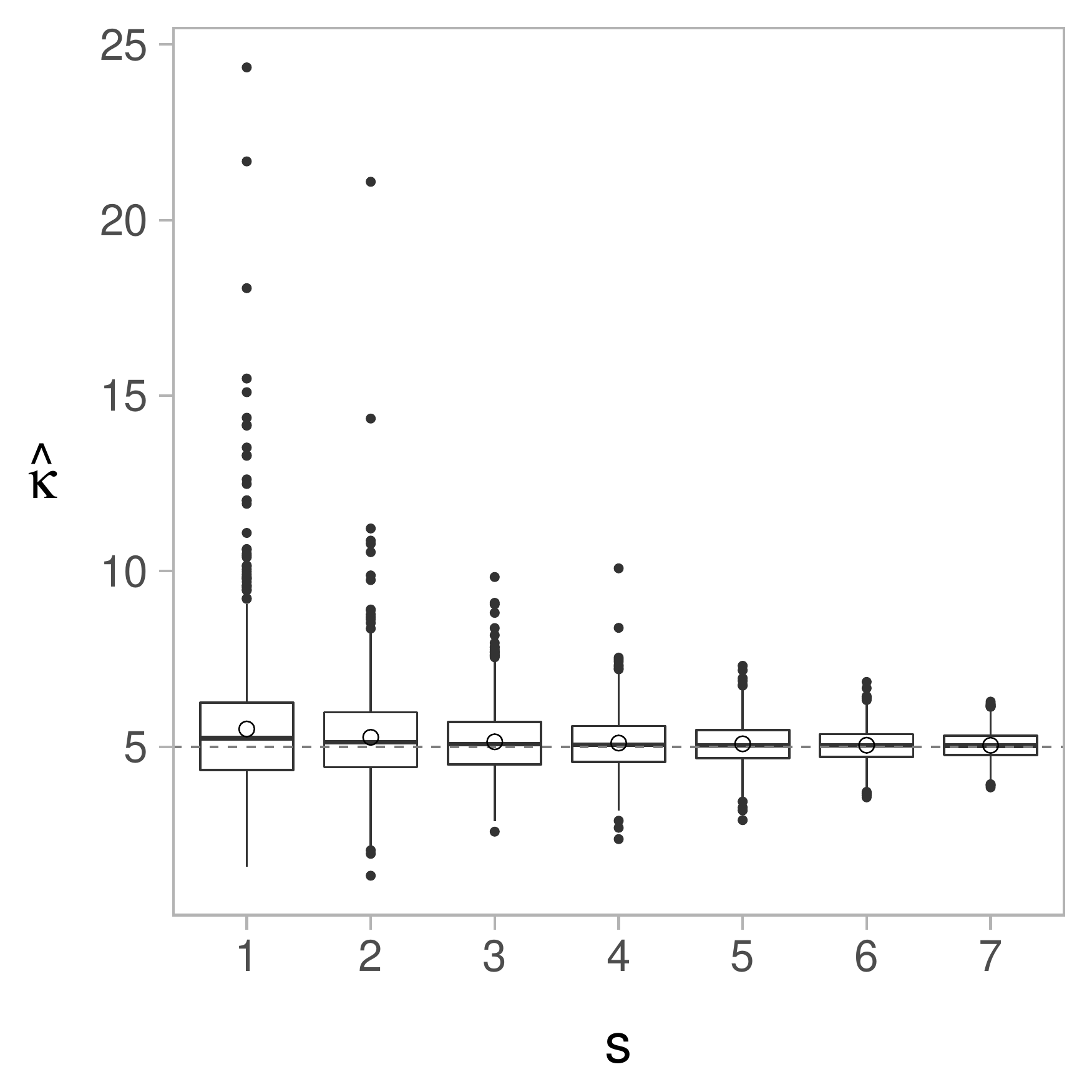}
	\caption{Results from simulation study: Box plot of joint parameter estimates, $\hat{\mu}$ (top left), $\hat{\alpha}$ (top right), and $\hat{\kappa}$ (bottom),  for the simulated Hawkes processes for each network of size $s$. Here $\circ$ is the empirical mean of the estimates}
	\label{fig:results_Hawkes1_simulation_study}
\end{figure}

\begin{figure}
\centering
\includegraphics[width=0.4\textwidth]{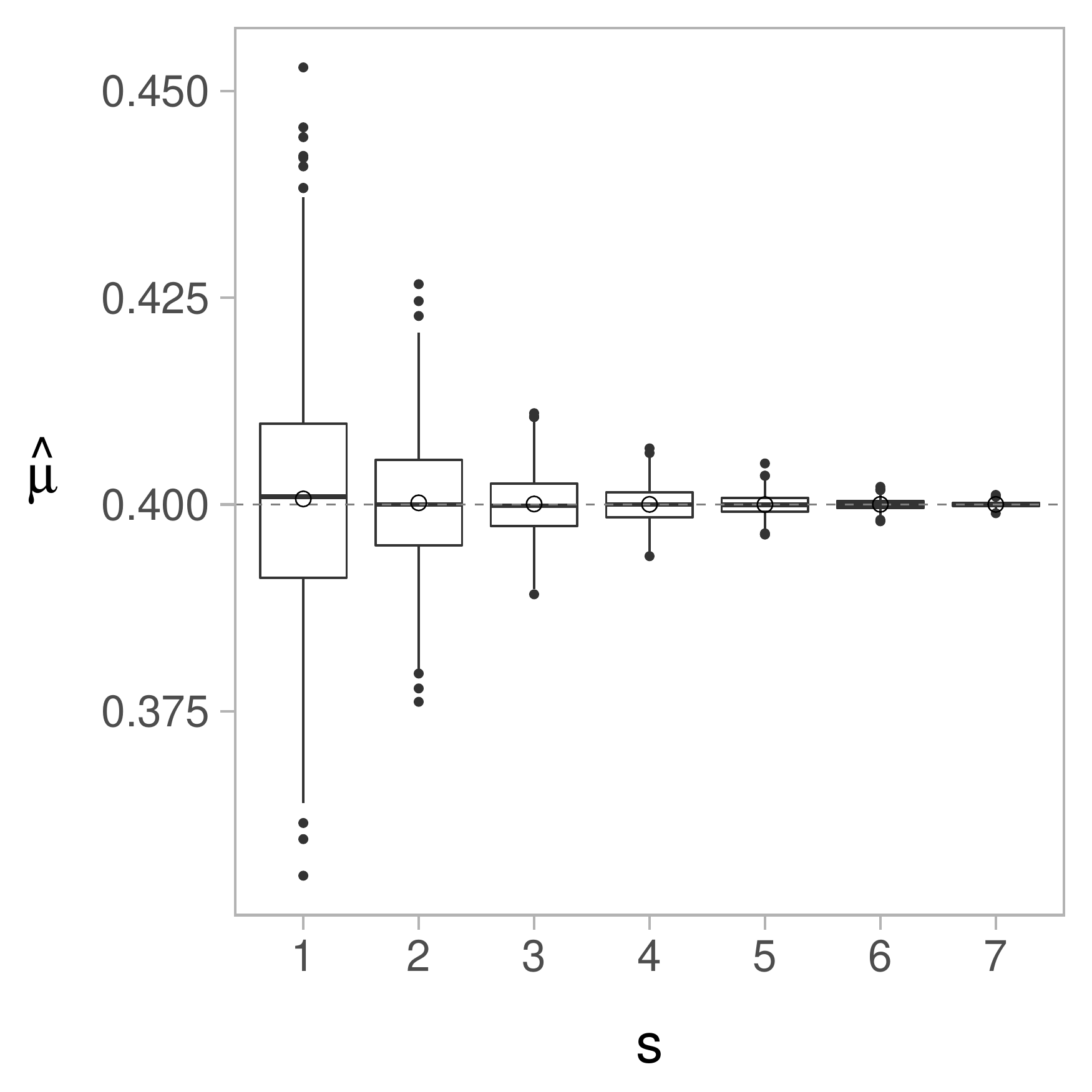}	
\includegraphics[width=0.4\textwidth]{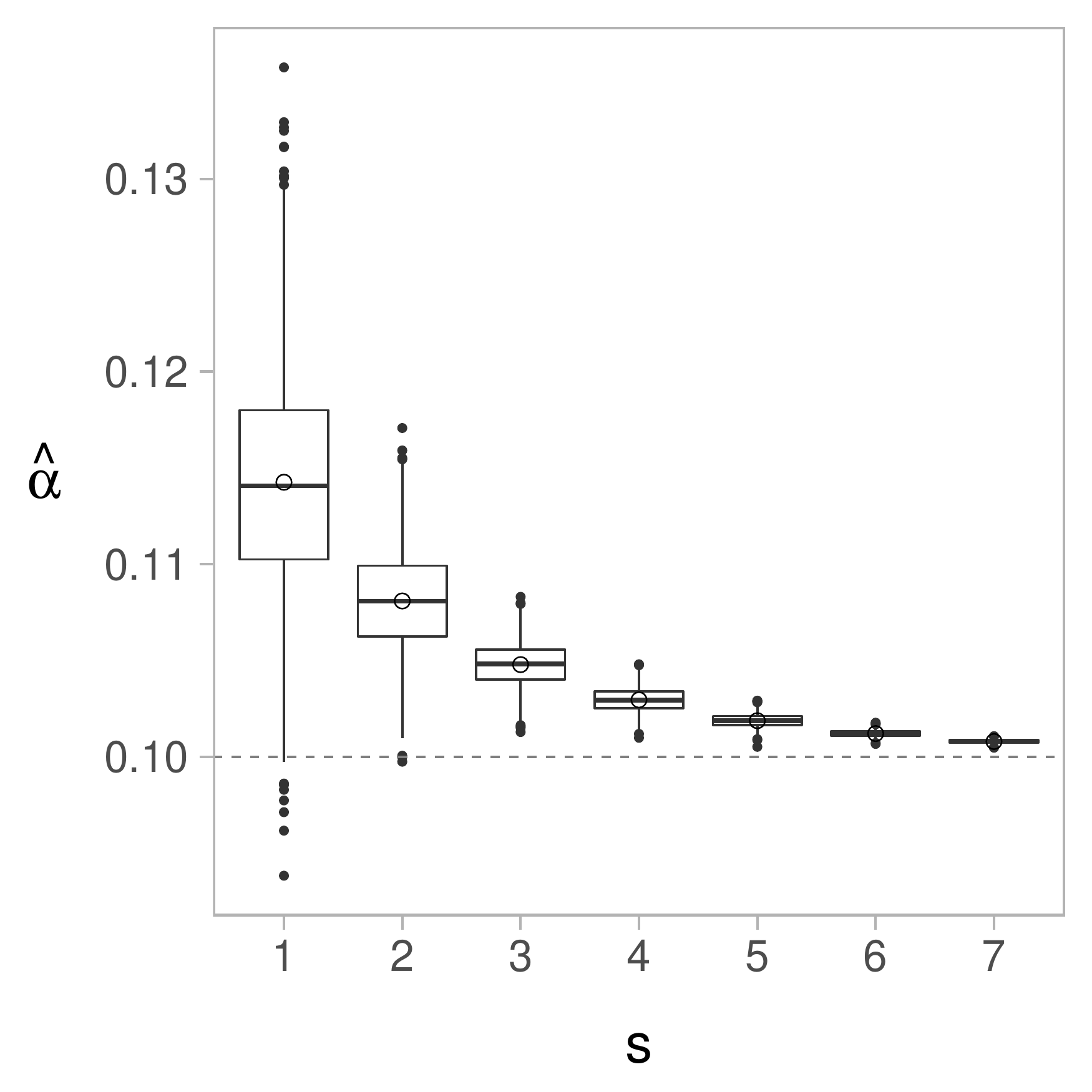}
\includegraphics[width=0.4\textwidth]{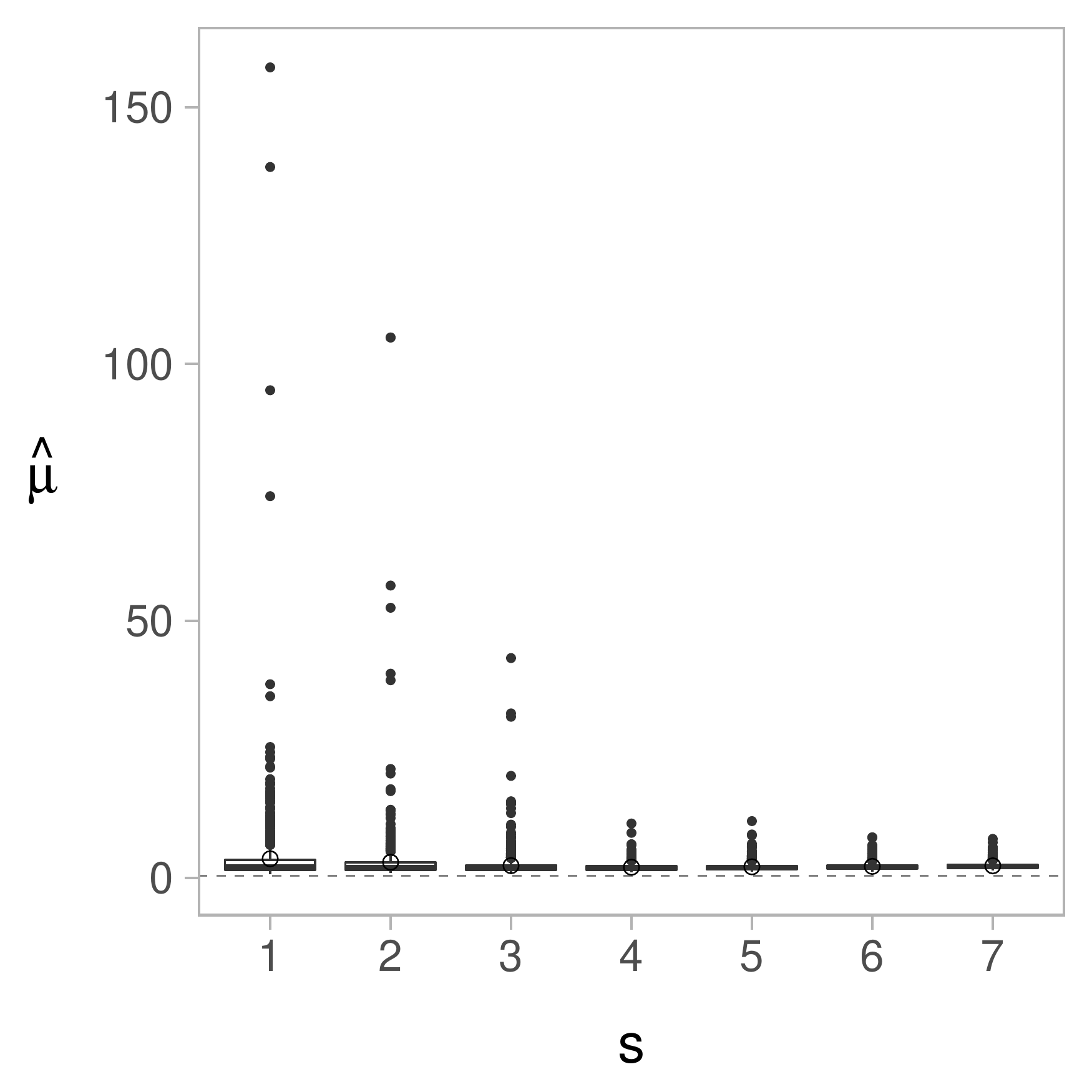}
\includegraphics[width=0.4\textwidth]{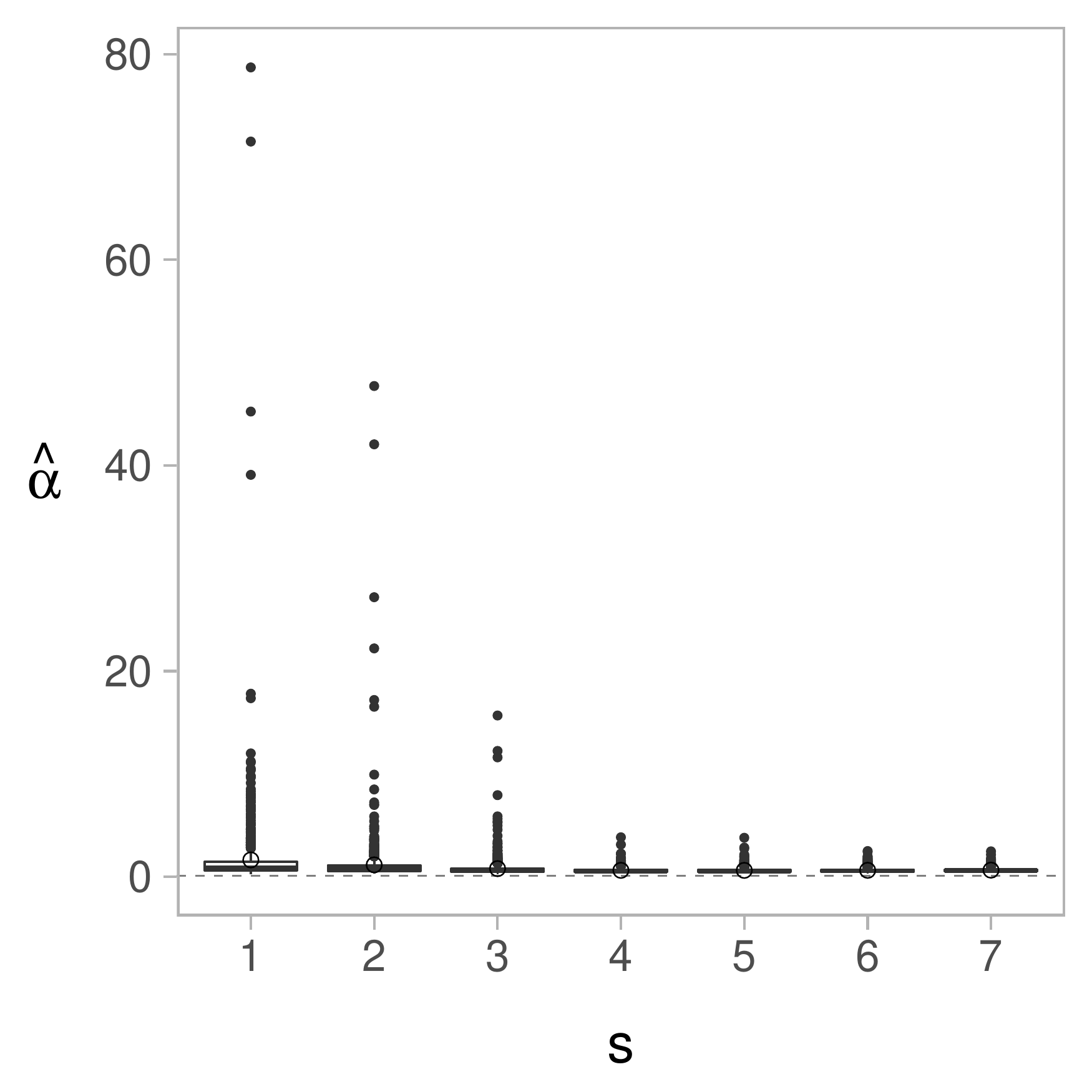}
\includegraphics[width=0.4\textwidth]{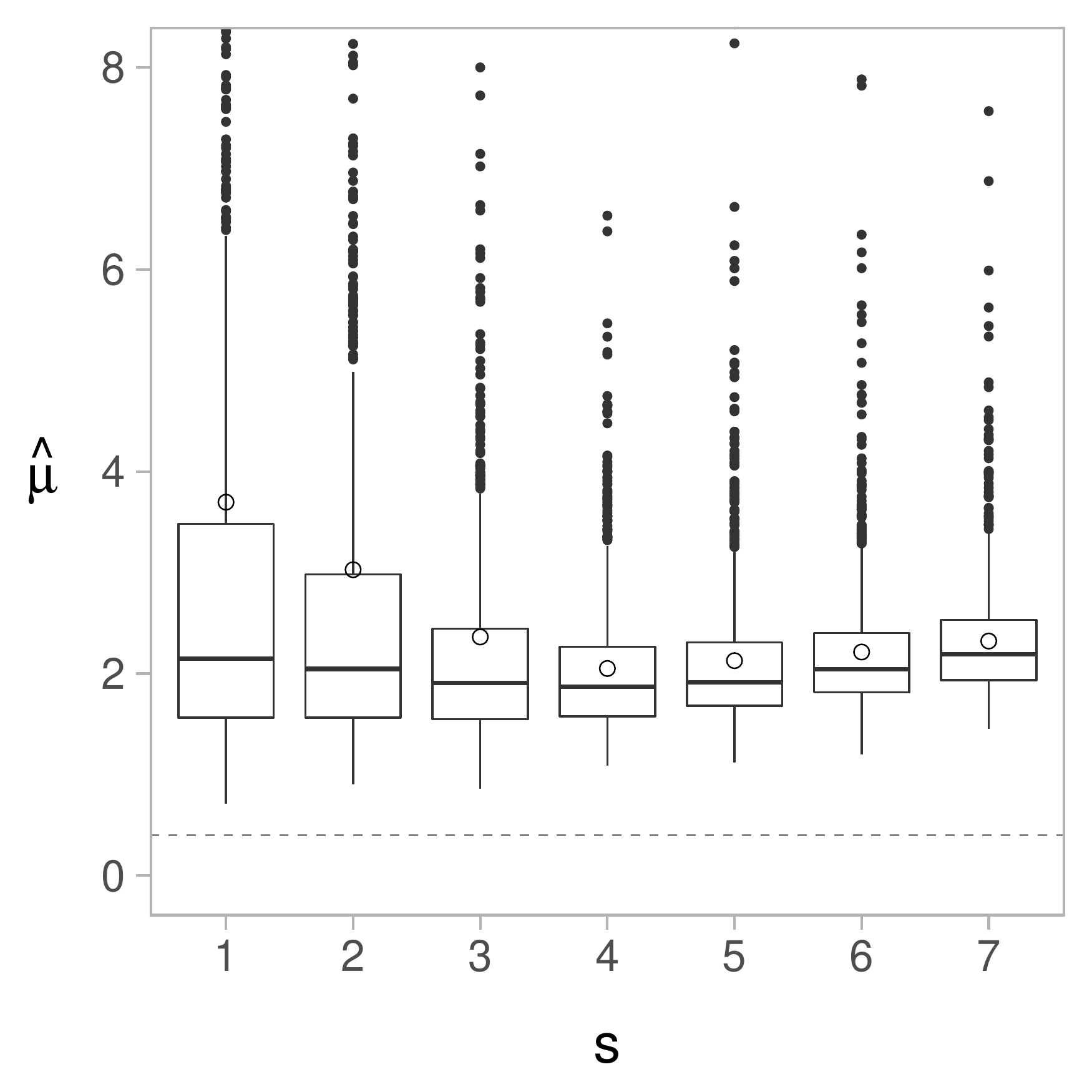}
\includegraphics[width=0.4\textwidth]{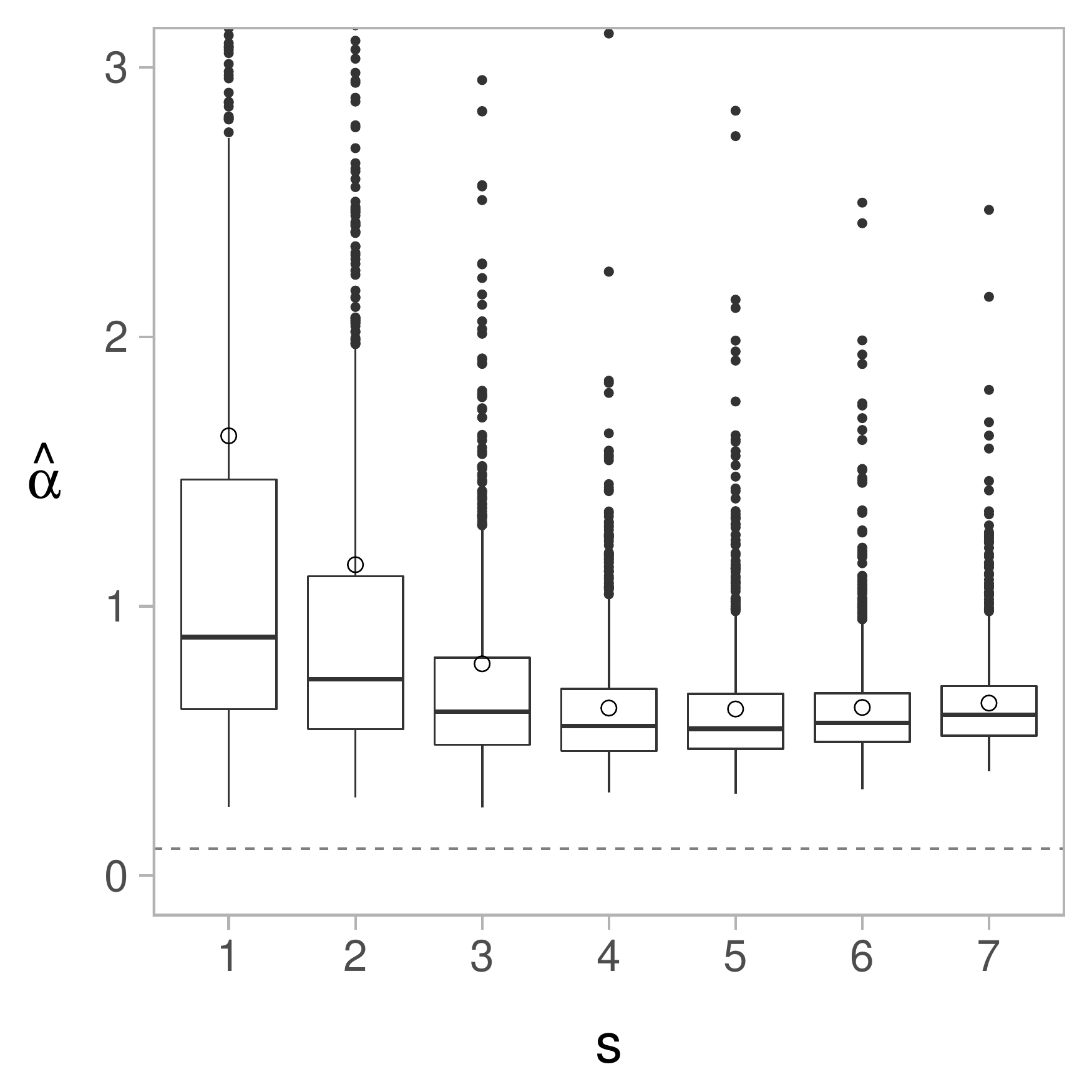}
\caption{Box plots of parameter estimates, $\hat{\mu}$ (left) and $\hat{\alpha}$ (right), for the self-correcting process on each network of size $s$. First panel: marginal parameter estimates. Second panel: joint estimates. Third panel: zoom of second panel.  Here $\circ$ denotes the mean estimate} 
\label{fig:results_Selfcorrecting1_simulation_study}
\end{figure}

\begin{figure}
	\centering
	\includegraphics[width=\textwidth]{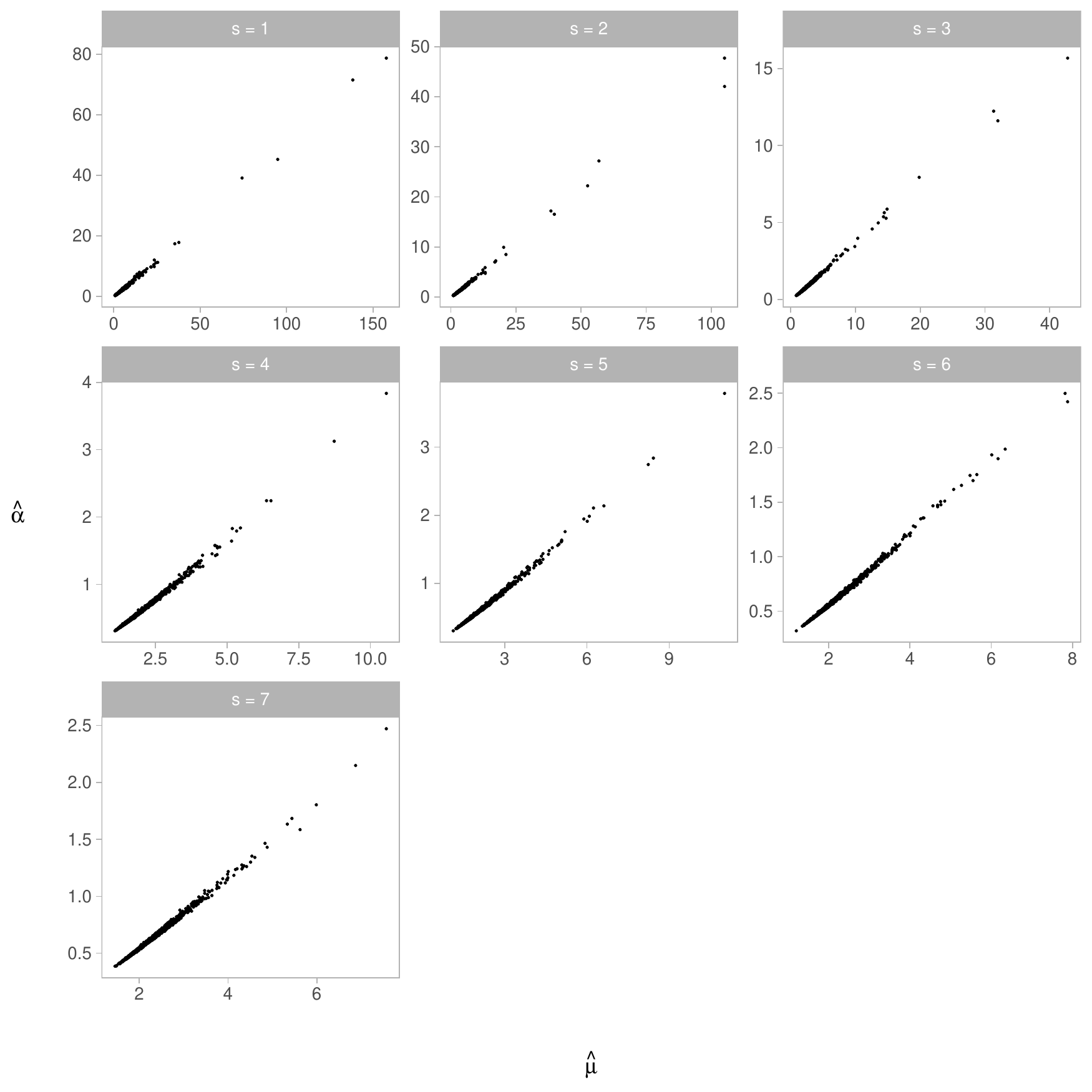}
	\caption{Plots of joint estimates from the self-correcting simulations. For each size of network $s$ considered in the simulation study, a plot of $\hat{\alpha}$ against $\hat{\mu}$}
	\label{fig:results_Selfcorrecting1_alphaVSmu}
\end{figure}

\begin{figure}
	\centering
	\includegraphics[width=0.75\textwidth]{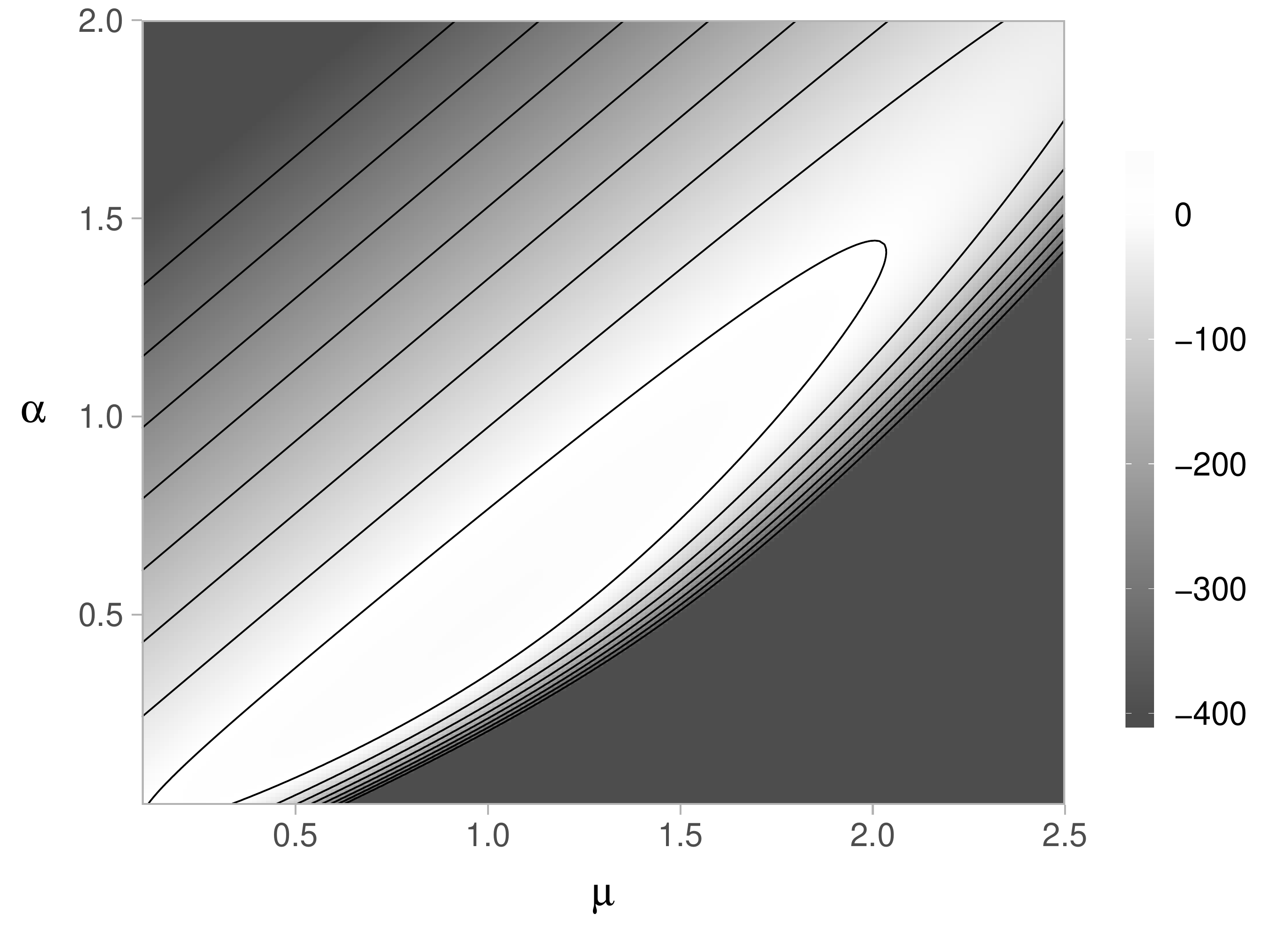}
	\caption{Contours of the log-likelihood for a simulated self-corrected process with $\mu = 0.4$ and $\alpha = 0.1$} 
	\label{fig:results_Selfcorrecting1_loglik}
\end{figure}

\subsection{Dendrite data}\label{sec.dendata}
In this section we consider a point pattern describing spine locations
on an apical dendrite tree from a mouse neuron. The dendrite tree was
first approximated by a linear network in $\RR^3$ (see
Figure~\ref{fig:neuron_network}). Next, a simplified version of the
network with fewer vertices was obtained by joining edges meeting at a
vertex of degree two. Then the network was embedded in $\RR^2$ (see
Figure~\ref{fig:neuron_network}) in order to directly use
functionalities from the R-package spatstat
\citep{baddeley-etal-15}. The embedding preserves distances, entailing
that distance-based analyses on the original network in $\RR^3$ and
the embedded network in $\RR^2$ are equivalent.  For example, the
geometrically corrected network $K$-function \citep{ang-etal-12} is
invariant under this kind of embedding. Letting the dendrite's
attachment point to the cell body be the root vertex of the network
tree, we can naturally consider the network as a DALN by introducing
directions going away from the root (hence the network thus obtained
is an out-tree).

First, we tested whether the spine locations can be described by a
homogeneous Poisson model with estimated intensity $n/|L|$, where
$n = 341$ is the number of spines and
$|L| = \SI{876}{\micro\meter}$
is the total network length. The empirical geometrically corrected
$K$-function (we return to the issue that this $K$-function ignores
directions in Section~\ref{sec.ext}), $\hat{K}$, may be used as a test
function in a global rank envelope test \citep{myllymaki-17}, where a
Monte Carlo approach is applied for approximating the distribution of
the test function under the null model. The global rank envelope
procedure both give critical bounds for the test function as well as
an interval going from the most liberal to the most conservative
$p$-value of the associated test. The $p$-interval associated with the
global rank envelope test for the homogeneous Poisson model is
$(0, 0.0096)$, indicating that the model is not appropriate.
Distances $r$, for which $\hat{K}(r)$ falls outside the critical
bounds (also called a global rank envelope) shown in
Figure~\ref{fig:neuron_envelope}, reveal possible reasons for
rejecting the model; in this case $\hat{K}(r)$ falls above the
envelope for $r$-values up to
$\SI{\approx 50}{\micro\meter}$, indicating
clustering at this scale.

To model the clustering, we next consider the dendrite tree as a
directed network and fit a Hawkes model, where we let $\gamma$ in
\eqref{eq.hawkes} be the density of an exponential distribution with
parameter $\kappa$. The three parameters, $\mu$, $\alpha$ and
$\kappa$, are estimated by numerically optimizing the
log-likelihood. The resulting estimates are $\hat{\mu} = 0.11$,
$\hat{\alpha} = 0.84$, and $\hat{\kappa} = 0.073$. According to
Section~\ref{sec:residual_analysis}, we can check whether the model
adequately describe our data by looking at the residuals. Again, we
use the global rank envelope procedure with $\hat{K}$ as test
function, but now for testing whether the residuals follow a unit-rate
Poisson model on the transformed network. The resulting $95 \%$-global
rank envelope, shown in Figure~\ref{fig:neuron_envelope}, has an
associated $p$-interval of $(0, 0.0068)$. However, the only
discordance detected between the residuals and the unit-rate Poisson
model with the global rank envelope is for $r$-values less than
$\SI{1}{\micro\meter}$. This may indicate that there is a small-scale
repulsion between the spines, which is not accounted for in the Hawkes
model.

In Figure~\ref{fig:interevent_times}, a Q-Q-plot of all interevent
distances in the residual process is shown along with labels
indicating whether the interevent distance is across a junction or
not. Regardless of whether we include these crossing interevent
distances or not, the distribution of the interevent distances seems
to deviate only slightly from the exponential distribution with mean
1.

\begin{figure}
	\begin{minipage}{0.5\textwidth}
		\includegraphics[width=\textwidth, trim={0 1cm 0 1cm}, clip,]{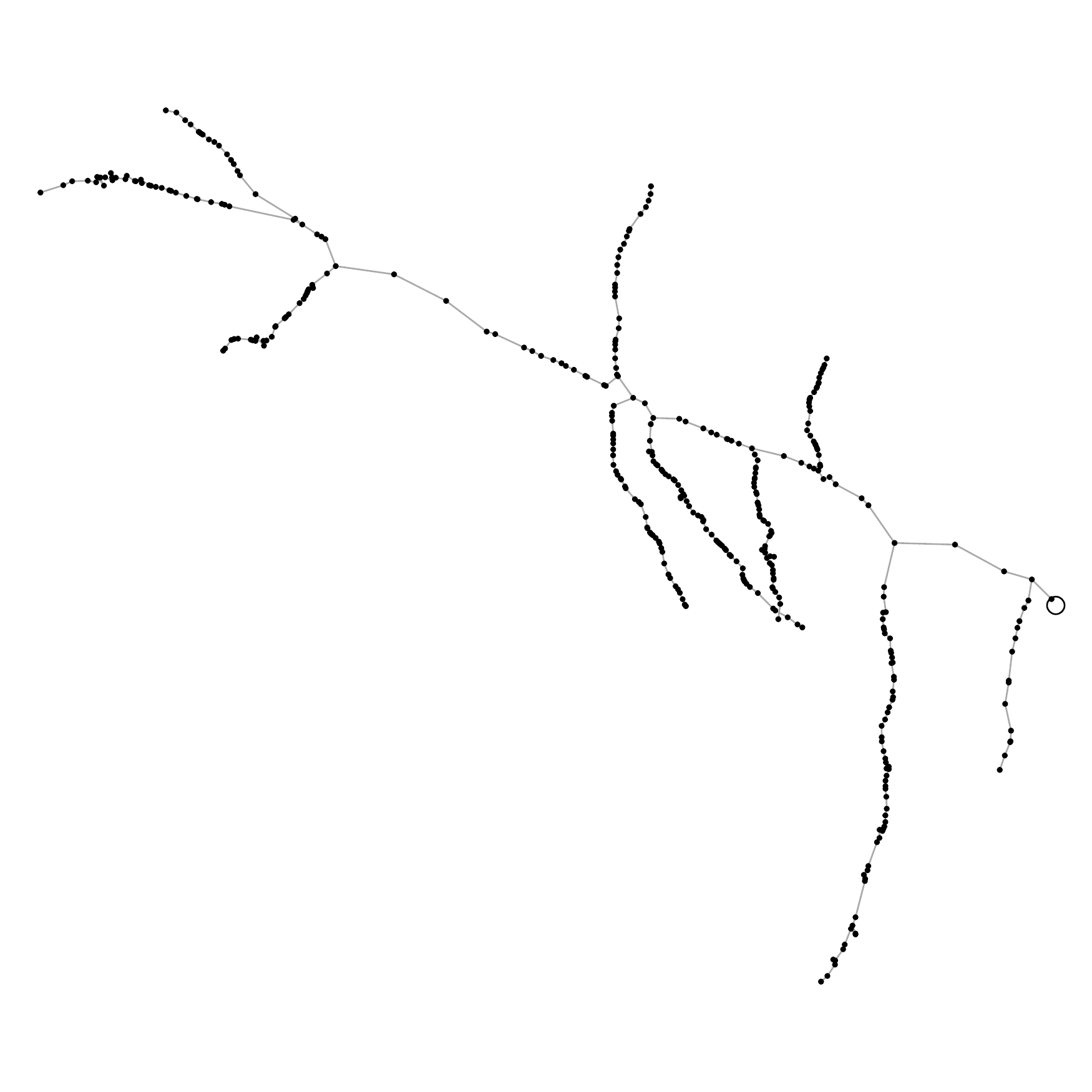}
	\end{minipage}\hspace{5mm}
	\begin{minipage}{0.5\textwidth}
		\includegraphics[width=\textwidth, trim={1cm 0 1cm 0}, clip]{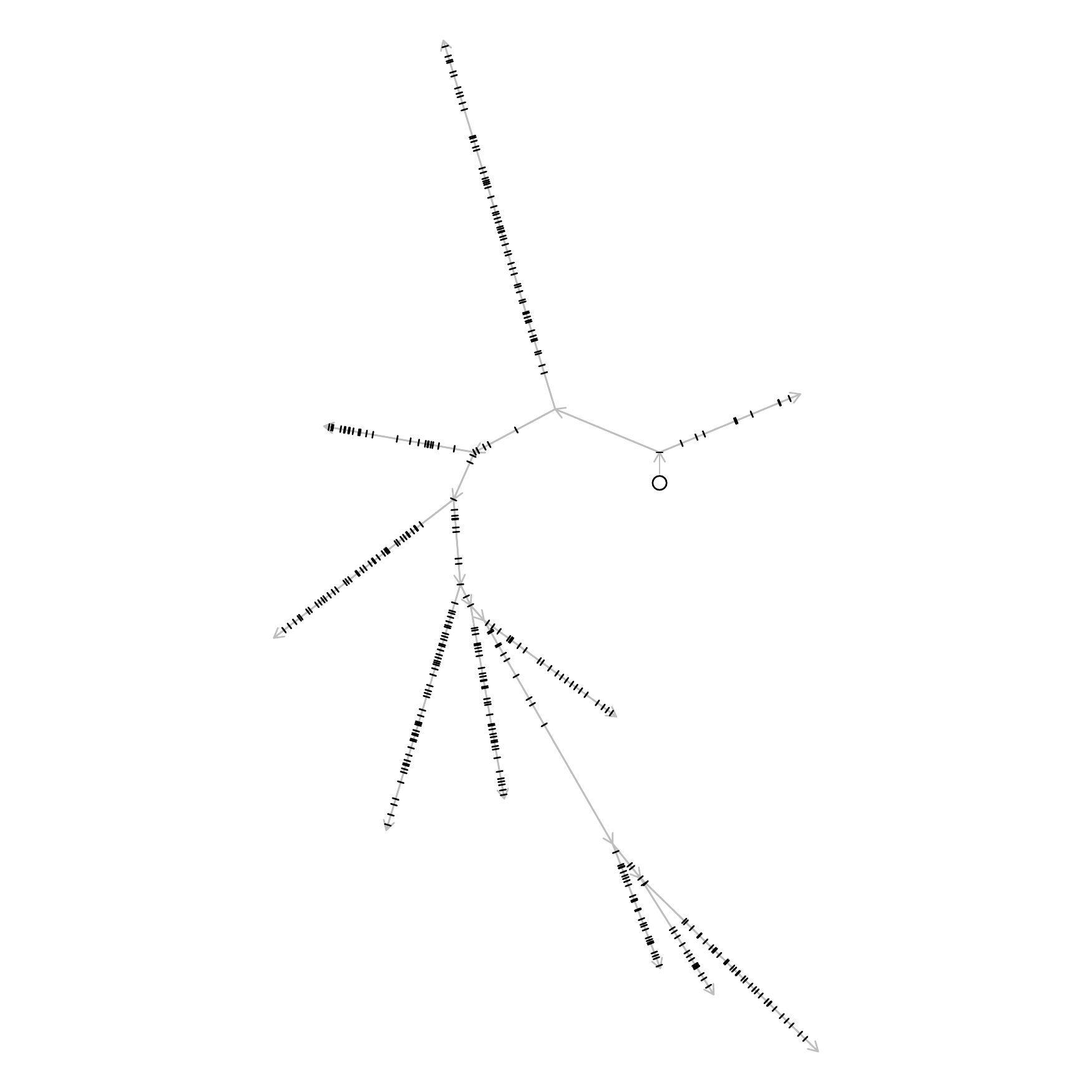}
	\end{minipage}
	\caption{Left: projection of the approximated dendrite tree onto $\RR^2$. Right: a distance-preserving embedding of the network into $\mathbb{R}^2$. Here $\circ$ identify the root of the dendrite tree, while $\bullet$ (left) and  \rule{1pt}{1.5ex} (right) are spine locations}
	\label{fig:neuron_network}
\end{figure}

\begin{figure}
	\centering
	\includegraphics[width=0.45\textwidth]{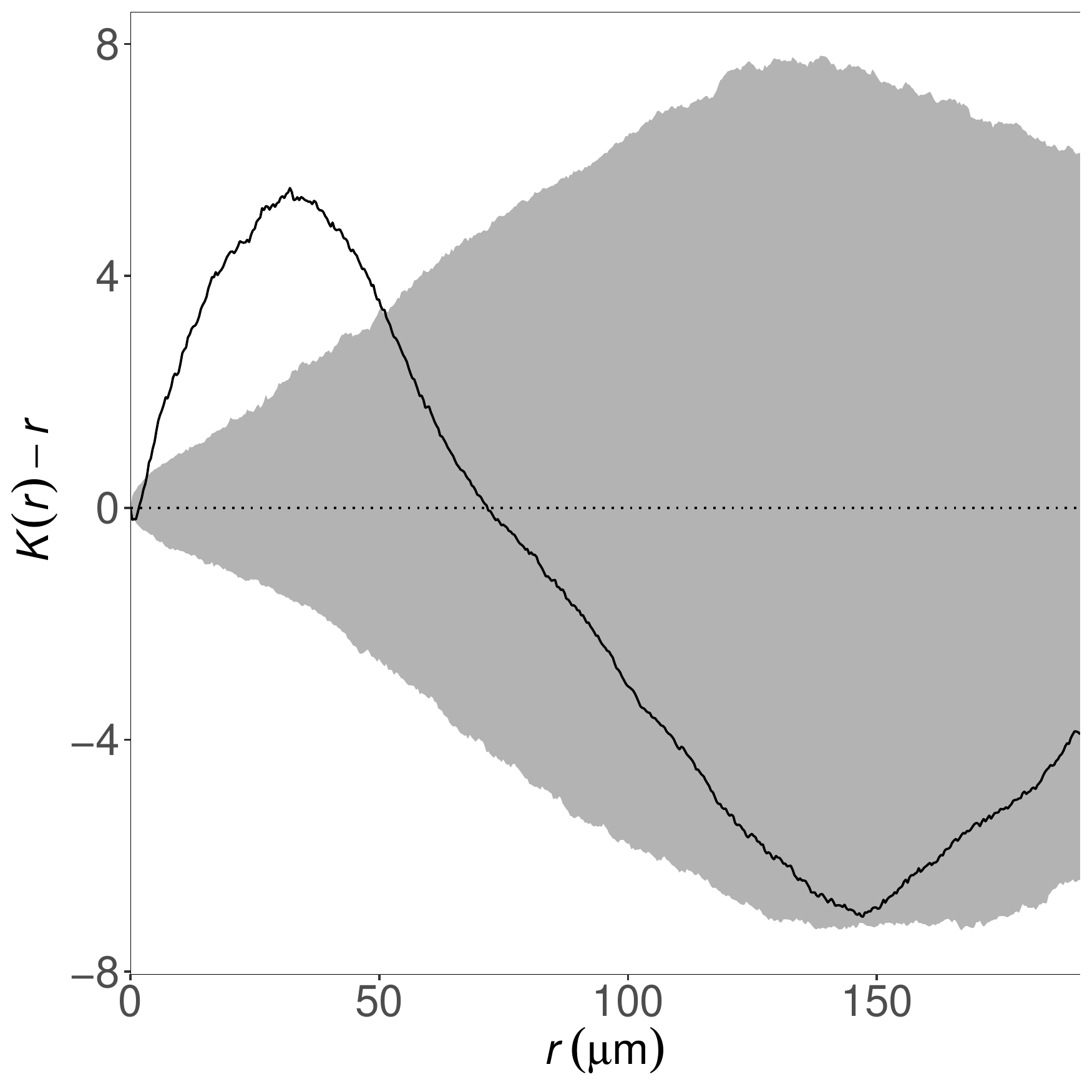}
	\includegraphics[width=0.45\textwidth]{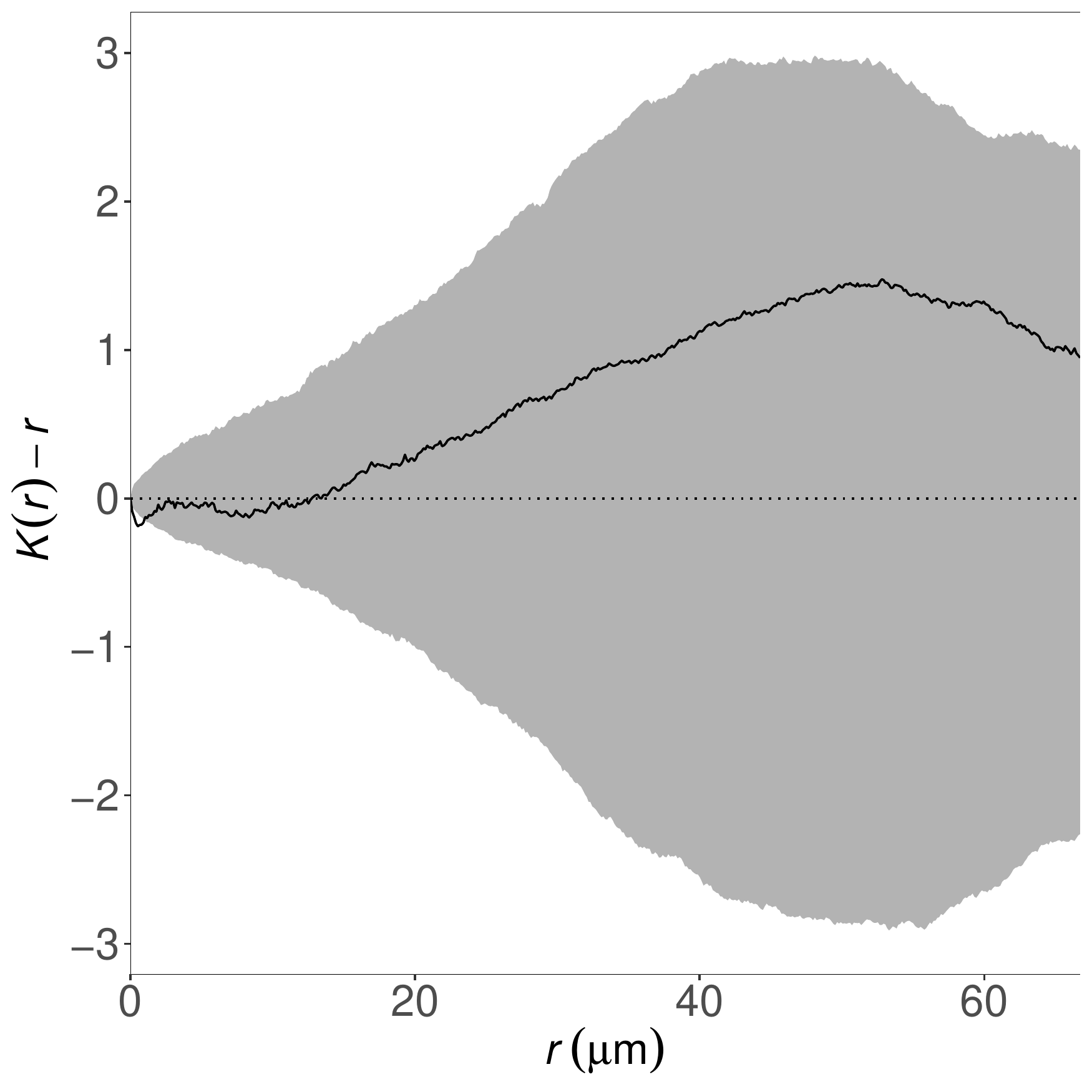}
	\caption{
		$\hat{K}(r)  - r$ (solid curve) for the observed spine locations (left) and the residuals from the fitted Hawkes model (right). The grey regions are $95 \%$-global rank envelopes based on 4999 simulations from a homogeneous Poisson model with intensity $n/|L|$ (left) and from a unit-rate Poisson (right), respectively}
	\label{fig:neuron_envelope}
\end{figure}

\begin{figure}
	\centering
	\includegraphics[width = 0.5\textwidth]{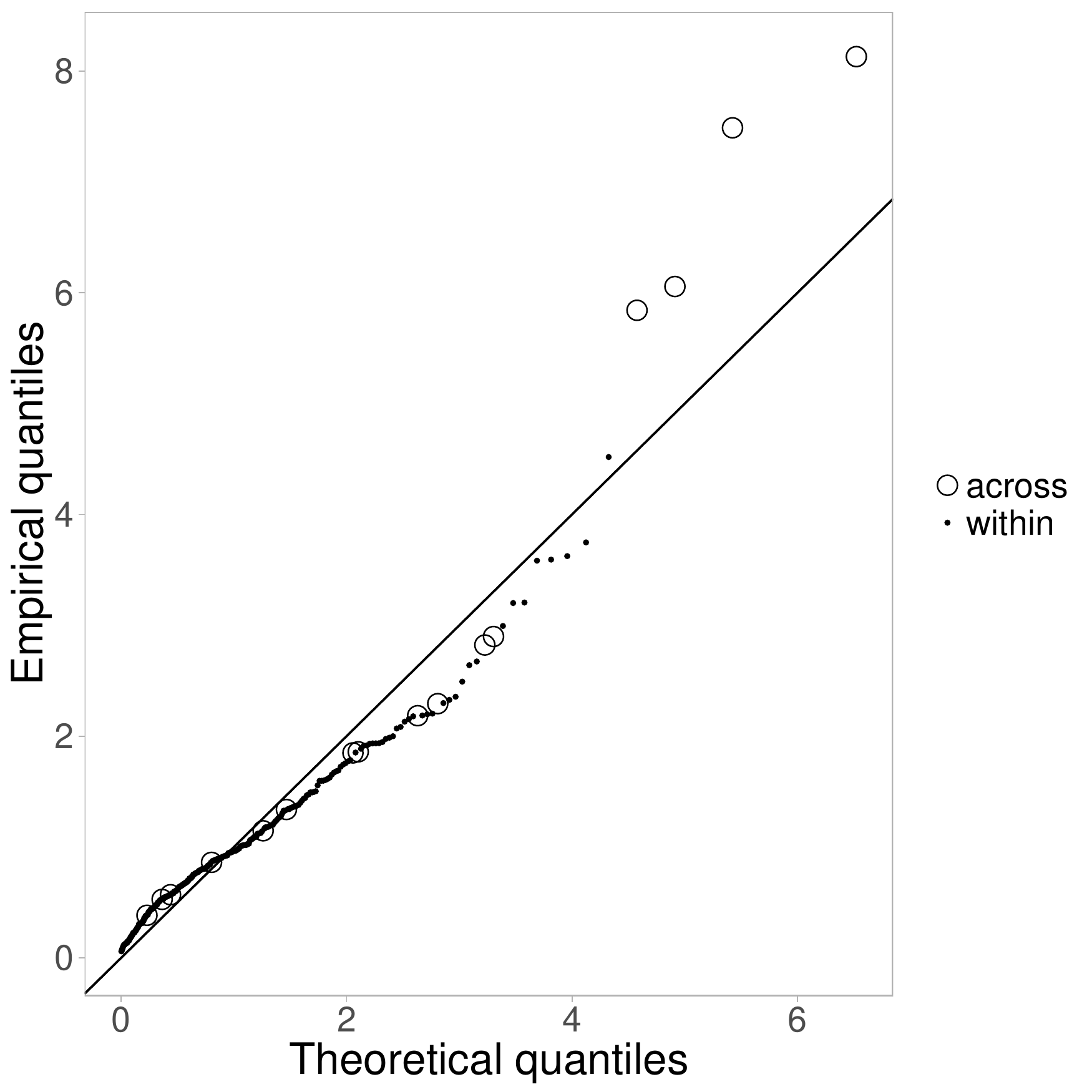}
	\caption{Q-Q-plot of quantiles for the interevent distances in
          the residual process for the spine locations vs.\
          theoretical quantiles from the exponential distribution with
          mean~1. The labels \textit{across} and \textit{within} refer
          to interevent distances for points on different line
          segments or on the same line segment, respectively}
	\label{fig:interevent_times}
\end{figure}

\section{Extensions and future research}\label{sec.ext}

In this final section we consider extensions and modifications of the models and methods as well as future research directions.

Recall that the so-called geometrically corrected $K$-function is introduced in \cite{ang-etal-12} for analysing point processes on linear networks, and among other things, this can be used for analysing a point pattern dataset for example to determine whether the points are more clustered or more regular than a homogeneous Poisson process. This information is relevant for choosing an appropriate model for modelling a dataset. For example, the Hawkes process in Section~\ref{sec.hawkes} is more clustered than a homogeneous Poisson process, the self-correcting process in Section~\ref{sec.selfcorr} is more regular, and the non-linear Hawkes process in Section~\ref{sec.nlh} can be both.
We used this in Section~\ref{sec.dendata} for checking whether the residual process behaved like a Poisson process as is expected if an adequately well-fitting model has been used for modelling the data. However, for a directed linear network the dependence structure is completely changed, and as a consequence the appropriate concepts of clustering and regularity are also changed. While the (undirected) geometrically corrected $K$-function certainly gives a good idea of the amount of clustering and regularity, the development of a directed geometrically corrected $K$-function is useful for quantifying such concepts in a more appropriate manner. We leave this as an object of future research. 

Essentially a linear network consists of a superposition of line segments in $\mathbb{R}^d$, but in \cite{anderes-etal-17} they have been generalized to graphs with Euclidean edges, which extends the linear network in various ways to include curve segments, crossing (but unconnected) segments, etc.\ Such a generalization can rather easily be made to directed linear networks to obtain a directed version of graphs with Euclidean edges, and all results in the present paper immediately extends to this case (we have only avoided making this extension to avoid a more cumbersome notation in this paper).

Furthermore, \cite{anderes-etal-17} also consider two different metrics on graphs with Euclidean edges: the shortest path metric, i.e.\ the length of the shortest path along the edges of the graph, and the resistance metric, i.e.\ the metric corresponding to the resistance in an electrical network (see also \cite{rakshit-etal-17} for use of various metrics on linear networks). We note that the quasi-metric $\ddl$ is the natural directed counterpart of the shortest path metric, and it would be the natural choice for modelling many kinds of point pattern data on a directed linear network. However, any other quasi-metric on $L$ can be used as a basis for building models on directed linear networks and may be relevant for practical applications where $\ddl$ is not appropriate.

The results in Section~\ref{sec.simdata} suggest that at least in some cases the maximum likelihood estimator has nice asymptotic properties. Specifically, the maximum likelihood estimates for the Hawkes processes seem unbiased and consistent, while the estimates in the self-correcting process are strongly biased and correlated. 
A proper development of asymptotic theory in the line of \cite{ogata-78} and \cite{rathbun-94} is important to establish properties of the maximum likelihood estimator for point processes specified by a conditional intensity function on a directed linear network.

\newpage

Section~\ref{sec.dendata} presents a very short analysis of a dendrite dataset using the conditional intensity function to build a point proces model on a directed linear network. The main purpose of this is to illustrate that the models and methods can be applied to real data. As a future research direction we plan to make a much more thorough analysis of the presented dendrite dataset and other similar datasets, where we will also model the spine types as marks, and derive practical results from the models with biological relevance.

\paragraph*{Acknowledgements.}
The authors would like to thank Abdel-Rahman Al-Absi who collected the
dendrite data, and Adrian Baddeley, Gopalan Nair and Valerie Isham for
preliminary discussions on ideas for the theory in the paper.

This work was supported by The Danish Council for Independent Research
-- Natural Sciences, grant DFF -- 7014-00074 ``Statistics for point
processes in space and beyond'', by the ``Centre for Stochastic
Geometry and Advanced Bioimaging'', funded by grant 8721 from the
Villum Foundation, and by the Australian Research Council grant
DP130102322, ``Statistical methodology for events on a network, with
application to road safety''.

\bibliographystyle{spbasic}      
\bibliography{bibliography}   

\begin{thebibliography}{27}
\providecommand{\natexlab}[1]{#1}
\providecommand{\url}[1]{{#1}}
\providecommand{\urlprefix}{URL }
\expandafter\ifx\csname urlstyle\endcsname\relax
  \providecommand{\doi}[1]{DOI~\discretionary{}{}{}#1}\else
  \providecommand{\doi}{DOI~\discretionary{}{}{}\begingroup
  \urlstyle{rm}\Url}\fi
\providecommand{\eprint}[2][]{\url{#2}}

\bibitem[{Anderes et~al.(2017)Anderes, M{\o}ller, and
  Rasmussen}]{anderes-etal-17}
Anderes E, M{\o}ller J, Rasmussen JG (2017) Isotropic covariance functions on
  graphs and their edges, arXiv:1710.01295

\bibitem[{Ang et~al.(2012)Ang, Baddeley, and Nair}]{ang-etal-12}
Ang QW, Baddeley A, Nair G (2012) Geometrically corrected second order analysis
  of events on a linear network, with applications to ecology and criminology.
  Scand J Stat 39:591--617

\bibitem[{Baddeley et~al.(2014)Baddeley, Jammalamadaka, and
  Nair}]{baddeley-etal-14}
Baddeley A, Jammalamadaka A, Nair G (2014) Multitype point process analysis of
  spines on the dendrite network of a neuron. J R Stat Soc Ser C 63:673--694

\bibitem[{Baddeley et~al.(2015)Baddeley, Rubak, and Turner}]{baddeley-etal-15}
Baddeley A, Rubak E, Turner R (2015) Spatial Point Patterns: Methodology and
  Applications with R. Chapman \& Hall/CRC Press, Boca Raton

\bibitem[{Baddeley et~al.(2017)Baddeley, Nair, Rakshit, and
  McSwiggan}]{baddeley-etal-17}
Baddeley A, Nair G, Rakshit S, McSwiggan G (2017) ``{S}tationary'' point
  processes are uncommon on linear networks. Stat 6:68--78

\bibitem[{Br\'{e}maud and Massouli\'{e}(1994)}]{bremaud-massoulie-94}
Br\'{e}maud P, Massouli\'{e} L (1994) Imbedded construction of stationary point
  processes and sequences with random memory. Queueing Syst 17:213--234

\bibitem[{Br\'{e}maud and Massouli\'{e}(1996)}]{bremaud-massoulie-96}
Br\'{e}maud P, Massouli\'{e} L (1996) Stability of nonlinear {H}awkes
  processes. Ann Probab 24:1563--1588

\bibitem[{Daley and Vere-Jones(2003)}]{daley-vere-jones-03}
Daley DJ, Vere-Jones D (2003) An Introduction to the Theory of Point Processes,
  Volume I: Elementary Theory and Methods, 2nd edn. Springer, New York

\bibitem[{Garreta et~al.(2010)Garreta, Monestiez, and Hoef}]{garreta-etal-09}
Garreta V, Monestiez P, Hoef JMV (2010) Spatial modelling and prediction on
  river networks: up model, down model or hybrid? Environmetrics 21:439--456

\bibitem[{Hawkes(1971{\natexlab{a}})}]{hawkes-71a}
Hawkes AG (1971{\natexlab{a}}) Point spectra of some mutually exciting point
  processes. J R Stat Soc Ser B 33:438--443

\bibitem[{Hawkes(1971{\natexlab{b}})}]{hawkes-71b}
Hawkes AG (1971{\natexlab{b}}) Spectra of some self-exciting and mutually
  exciting point processes. Biometrika 58:83--90

\bibitem[{Hawkes(1972)}]{hawkes-72}
Hawkes AG (1972) Spectra of some mutually exciting point processes with
  associated variables. In: Lewis PAW (ed) Stochastic Point Processes, pp
  261--271

\bibitem[{Hawkes and Oakes(1974)}]{hawkes-oakes-74}
Hawkes AG, Oakes D (1974) A cluster representation of a self-exciting process.
  J Appl Probab 11:493--503

\bibitem[{Irwin et~al.(2001)Irwin, Patel, Idupulapati, Harris, Crisostomo,
  Larsen, Kooy, Willems, Cras, Kozlowski, Swain, Weiler, and
  Greenough}]{irwin-etal-01}
Irwin S, Patel B, Idupulapati M, Harris J, Crisostomo R, Larsen B, Kooy F,
  Willems P, Cras P, Kozlowski P, Swain R, Weiler I, Greenough W (2001)
  Abnormal dendritic spine characteristics in the temporal and visual cortices
  of patients with fragile-{X} syndrome: a quantitative examination. Amer J Med
  Gen 98:161--167

\bibitem[{Isham and Westcott(1979)}]{isham-westcott-79}
Isham V, Westcott M (1979) A self-correcting point process. Stoch Proc Appl
  8:335--347

\bibitem[{Jammalamadaka et~al.(2013)Jammalamadaka, Banerjee, Manjunath, and
  Kosik}]{jammalamadaka-etal-13}
Jammalamadaka A, Banerjee S, Manjunath B, Kosik K (2013) Statistical analysis
  of dendritic spine distributions in rat hippocampal cultures. BMC
  Bioinformatics 14:287

\bibitem[{McSwiggan et~al.(2016)McSwiggan, Baddeley, and
  Nair}]{mcswiggan-etal-16}
McSwiggan G, Baddeley A, Nair G (2016) Kernel density estimation on a linear
  network. Scand J Stat 44:324--345

\bibitem[{M{\o}ller and Rasmussen(2005)}]{moller-rasmussen-05}
M{\o}ller J, Rasmussen JG (2005) Perfect simulation of {H}awkes processes. Adv
  Appl Probab 37:629--646

\bibitem[{M{\o}ller and Rasmussen(2006)}]{moller-rasmussen-06}
M{\o}ller J, Rasmussen JG (2006) Approximate simulation of {H}awkes processes.
  Methodol Comput Appl Probab 8:53--64

\bibitem[{Myllym{\"a}ki et~al.(2017)Myllym{\"a}ki, Mrkvi\v{c}ka, Grabarnik,
  Seijo, and Hahn}]{myllymaki-17}
Myllym{\"a}ki M, Mrkvi\v{c}ka T, Grabarnik P, Seijo H, Hahn U (2017) Global
  envelope tests for spatial processes. J R Stat Soc Ser B 79:381--404

\bibitem[{Ogata(1978)}]{ogata-78}
Ogata Y (1978) The asymptotic behaviour of maximum likelihood estimators for
  stationary point processes. Ann Inst Stat Math 30:243--261

\bibitem[{Ogata(1981)}]{ogata-81}
Ogata Y (1981) On {L}ewis' simulation method for point processes. IEEE Trans
  Inf Theory 27:23--31

\bibitem[{Okabe and Yamada(2001)}]{okabe-01}
Okabe A, Yamada I (2001) The {$K$}-function method on a network and its
  computational implementation. Geogr Anal 33:271--290

\bibitem[{Rakshit et~al.(2017)Rakshit, Nair, and Baddeley}]{rakshit-etal-17}
Rakshit S, Nair G, Baddeley A (2017) Second-order analysis of point patterns on
  a network using any distance metric. Spat Stat 22:129--154

\bibitem[{Rathbun(1996)}]{rathbun-94}
Rathbun SL (1996) Asymptotic properties of the maximum likelihood estimator for
  spatio-temporal point processes. J Stat Plan Inference 51:55--74

\bibitem[{{Ver Hoef} and Peterson(2012)}]{ver-hoef-etal-12}
{Ver Hoef} JM, Peterson EE (2012) A moving average approach for spatial
  statistical models of stream networks. J Amer Stat Assoc 105:6--18

\bibitem[{{Ver Hoef} et~al.(2006){Ver Hoef}, Peterson, and
  Theobald}]{ver-hoef-etal-06}
{Ver Hoef} JM, Peterson E, Theobald D (2006) Spatial statistical models that
  use flow and stream distance. Environ Ecol Stat 13:449--464

\end{thebibliography}

\end{document}